\newtheorem{theorem}{Theorem}[section]
\newtheorem{proposition}[theorem]{propositionosition}
\newtheorem{corollary}[theorem]{Corollary}
\newtheorem{remark}[theorem]{Remark}
\newtheorem{example}[theorem]{Example}
\author{Ratan Lal, Ramjash Gurjar and Vipul Kakkar\\ \textit{vermarattan789@gmail.com, ramjashgurjar83@gmail.com},\\ \textit{vplkakkar@gmail.com}}
\title{Semidirect Product of Loops with Groups}
\date{\today}
\begin{document}
	\maketitle
	\begin{abstract}
	 	In this paper, we have studied the loops which are the semidirect products of a loop and a group. Also, the cummutant, nuclei and the center of such loops are studied.
	\end{abstract}
\textbf{Keywords:} Semidirect Product, Loops, Split-metacyclic loops, Bol loops, C-loops, Alternative loops
\section{Introduction}

A non-empty set $L$ with a binary operation $\cdot$ is called a loop if $(i)$ for all $a, b \in L$, the equations $x\cdot a = b$ and $a\cdot x = b$ have unique solution in $L$, where $x$ is an unknown and $(ii)$ there is an identity element $e\in L$ such that $a\cdot e = a = e\cdot a$. A non-empty subset $L^{\prime}$ of $L$ is said to be a subloop of $L$ if $L^{\prime}$ is itself a loop under the induced binary operation of $L$.  
\vspace{.2cm}

\noindent Recall that in group theory, if $G$ is a group with a normal subgroup $H$ and a subgroup $K$ such that $G = HK$ and $H\cap K = \{1\}$, then $G$ is called the internal semidirect product of groups. On the other hand, if $H$ and $K$ are two groups with a group homomorphism $\phi: K \longrightarrow Aut(H)$ defined by $k \mapsto \phi_{k}$, where $Aut(H)$ is the group of automorphisms of $H$, then the set $G = H\times K$ is a group with binary operation defined as 
\begin{equation}\label{s1e1}
(h_{1}, k_{1})\cdot(h_{2}, k_{2}) = (h_{1}\phi_{k_{1}}(h_{2}), k_{1}k_{2})
\end{equation}
for all $(h_{1}, k_{1}), (h_{2}, k_{2})\in G$. This is called the external semidirect product and denoted by $G = H\rtimes_{\phi} K$.
\vspace{.2cm}

\noindent	Figula and Strambach \cite{Fig07} studied the loops that are the semidirect product of two groups. Kinyon and Jones \cite{kinyon20} generalized the notion of semidirect product of groups to the semidirect product of a loop and a group. Let $N$ be a loop and $H$ be a group. Let $\phi: H \longrightarrow Sym(N)_{1}$ be a group homomorphism defined by $h\mapsto \phi_{h}$ such that $\phi_{h}(1) = 1$ for all $h\in H$, where $Sym(N)_{1}$ is the stabilizer of $1$ in $Sym(N)$. Then $L = N \rtimes_{\phi} H$ is a loop with the binary operation defined in the Equation $(\ref{s1e1})$ and is called the external semidirect product of a loop and a group. In \cite{drapal17}, \cite{gag14} and \cite{rg14}, Moufang loops have been studied as the extension of a normal Moufang subloop by a cyclic group.
\vspace{.2cm}

\noindent In Section 2, we have discussed some basic properties required. In Section 3, we have proved that the external semidirect product of a loop and group is equivalent to the internal semidirect product of a loop and a group. Also, a description of the left nucleus, middle nucleus, right nucleus, center and commutant of the loop $L$ is given. Moreover, we have shown that the middle and right nucleus  are same. 
\vspace{.2cm}

\noindent	In Section 4, we have obtained a necessary and sufficient condition for two semidirect products of a loop and a group to be isomorphic. Section 5 is devoted to the study of split meta-cyclic loops. Further, we have proved that the nuclei of a metacyclic loop are always groups.	At last, we have classified the split metacyclic loops of order $3^{3}$.

\section{Preliminaries}
Let $L$ be a loop. Let $a,b,c\in L$. By $a\cdot bc$ we mean $a(bc)$ and by $ab\cdot c$ we mean $(ab)c$. Following are some important subsets of a loop $L$.
\begin{itemize}
	\item[$(i)$]  ${N}_\lambda(L) = \{a\in L\mid a\cdot xy = ax\cdot y\}$ is called the left nucleus of $L$,
	\item[$(ii)$] ${N}_\mu(L) = \{a\in L\mid x\cdot ay = xa\cdot y\}$ middle nucleus of $L$,
	\item[$(iii)$] ${N}_\rho(L) = \{a\in L\mid x\cdot ya = xy\cdot a\}$ right nucleus of $L$,
	\item[$(iv)$] ${N}(L) = {N}_\lambda(L) \cap {N}_\mu(L) \cap {N}_\rho(L)$ is called the nucleus of $L$,
	\item[$(v)$] ${C}(L) = \{a\in L \mid a\cdot x = x\cdot a \; \forall x\in L\}$ is called the commutant of $L$,
	\item[$(vi)$] ${Z}(L) = N(L)\cap C(L)$ is called the center of $L$.
\end{itemize}

\noindent	Let $L = N\rtimes_{\phi} H$ be a loop, where $N$ is a loop, $H$ is a group and $\phi : H \longrightarrow Sym(N)_{1}$ is a group homomorphism. Then we have the following remark.

\begin{remark}\label{rem1}
	Note that there are five different bracket arrangements on four symbols. Below, we will show that these five brackets are equal in the following sense
	\begin{align*}
	(i)~~ ((n_{1}, 1)(1, h_{1}))((n_{2}, 1)(1, h_{2})) &= (n_{1}, h_{1})(n_{2}, h_{2})\\
	&= (n_{1}\phi_{h_{1}}(n_{2}), h_{1}h_{2}).\\
	(ii)~~ (n_{1}, 1)((1,h_{1})((n_{2}, 1)(1, h_{2}))) &= (n_{1}, 1)((1,h_{1})(n_{2}, h_{2}))\\
	&= (n_{1}, 1)(1\phi_{h_{1}}(n_{2}), h_{1}h_{2})\\
	&=(n_{1}\phi_{1}(\phi_{h_{1}}(n_{2})), h_{1}h_{2})\\
	&= (n_{1}\phi_{h_{1}}(n_{2}), h_{1}h_{2}).\\
	(iii)~~(n_{1}, 1)((1,h_{1})((n_{2}, 1))(1, h_{2})) &= (n_{1}, 1)((1\phi_{h_{1}}(n_{2}), h_{1})(1,h_{2}))\\
	&= (n_{1}, 1)(\phi_{h_{1}}(n_{2})\phi_{h_{1}}(1), h_{1}h_{2})\\
	&= (n_{1}\phi_{1}(\phi_{h_{1}}(n_{2})), h_{1}h_{2})\\
	&= (n_{1}\phi_{h_{1}}(n_{2}), h_{1}h_{2}).\\
	(iv)~~(((n_{1}, 1)(1,h_{1}))(n_{2}, 1))(1, h_{2}) &= ((n_{1}, h_{1})(n_{2}, 1))(1, h_{2})\\				
	&= (n_{1}\phi_{h_{1}}(n_{2}), h_{1})(1,h_{2})\\
	&= ((n_{1}\phi_{h_{1}}(n_{2}))\phi_{h_{1}}(1), h_{1}h_{2})\\
	&= (n_{1}\phi_{h_{1}}(n_{2}), h_{1}h_{2}).\\
	(v)~~((n_{1}, 1)((1,h_{1})(n_{2}, 1)))(1, h_{2}) &= ((n_{1}, 1)(1\phi_{h_{1}}(n_{2}), h_{1}))(1, h_{2})\\				
	&= (n_{1}\phi_{1}(\phi_{h_{1}}(n_{2})), h_{1})(1,h_{2})\\
	&= ((n_{1}\phi_{h_{1}}(n_{2}))\phi_{h_{1}}(1), h_{1}h_{2})\\
	&= (n_{1}\phi_{h_{1}}(n_{2}), h_{1}h_{2}).\\
	\end{align*}
\end{remark}

\begin{remark}\label{rem3}
	Note that there are two different bracket arrangements on three symbols. Below, we will show that these two brackets are equal in the following sense
	\begin{eqnarray*}
		(1,h)((n,1)(1,h)^{-1}) &= (1,h)(n,h^{-1})\\
		&= (\phi_{h}(n), hh^{-1})\\
		&= (\phi_{h}(n)\phi_{h}(1), hh^{-1})\\
		&= (\phi_{h}(n), h)(1, h^{-1})\\
		&= (1\phi_{h}(n), h1)(1, h^{-1})\\
		&= ((1,h)(n,1))(1,h)^{-1}.
	\end{eqnarray*}
\end{remark}

\begin{remark}
	Once we will prove that the internal semidirect product is isomorphic to the external semidirect product, the Remarks \ref{rem1} and \ref{rem3} will take the following forms 
	$(n_{1}h_{1})(n_{2}h_{2}) = n_{1}((h_{1}n_{2})h_{2}) = n_{1}(h_{1}(n_{2}h_{2})) = ((n_{1}h_{1})n_{2})h_{2} = (n_{1}(h_{1}n_{2}))h_{2}$ and $h(nh^{-1}) = (hn)h^{-1}$ for all $n, n_{1}, n_{2}\in N$ and $h, h_{1}, h_{2}\in H$. 
\end{remark}
Throughout the paper, we will use the Remarks \ref{rem1} and \ref{rem3} frequently without mentioning.
\begin{remark}\label{rem2}
	Let $(n,h)\in L$ be any element. Then the left inverse of $(n,h)$ is defined as ${^{-1}(n,h)} = ((\phi_{h^{-1}}(n))^{-1}, h^{-1})$ and the right inverse of $(n,h)$ is defined by $(n,h)^{-1} =  (\phi_{h^{-1}}(n^{-1}), h^{-1})$.
\end{remark}
\section{Loops as Semidirect Product}
In this section, we will prove the equivalence between the external and internal semidirect product of a loop $N$ and a group $H$.
In addition, we will give a description of the commutant, left nucleus, right nucleus, middle nucleus and center of the loop $L$. Throughout this section, $L$ will denote the loop $N\rtimes_{\phi} H$. We will replace $\cdot$ with juxtaposition suitably.

\vspace{.2cm}

\noindent	Now, using the Remarks \ref{rem1} and \ref{rem3}, for any $(n_{1}, h_{1}), (n_{2}, h_{2}) \in L$, we have
\begin{align*}
(n_{1}, h_{1})\cdot(n_{2}, h_{2}) &= ((n_{1},h_{1})(n_{2}, 1))\cdot(1,{h_{1}}^{-1}h_{1}h_{2}) \\
&= (((n_{1},h_{1})(n_{2}, 1))\cdot(1,{h_{1}}^{-1}))\cdot(1,h_{1}h_{2})\\
&=  ((n_{1}, 1)\cdot((1,h_{1})\cdot((n_{2},1)(1,{h_{1}}^{-1}))))\cdot(1,h_{1}h_{2}) \\
&= ((n_{1},1)(\phi_{h_{1}}(n_{1}), 1))\cdot(1,h_{1}h_{2}) \\
&= (n_{1}\phi_{h_{1}}(n_{1}),1)\cdot(1,h_{1}h_{2})\\
&= (n_{3},1)(1,h_{3}), 
\end{align*}
where $(n_{3},1) = (n_{1}\phi_{h_{1}}(n_{1}),1) \in N \times \{1\}$ and $(1,h_{3}) = (1,h_{1}h_{2})\in \{1\}\times H$.  Clearly, the map $nh \mapsto (n,h)$ is a bijection of the sets $NH$ and $N\times H$. Therefore, identifying $N$ with $N\times \{1\}$ and $H$ with $\{1\}\times H$, we get $h(nh^{-1}) \in N$.  Thus, we define a left action $\ast$ of $H$ on $N$ as
\begin{equation*}
h\ast n = h(nh^{-1}).
\end{equation*}
This defines a group homomorphism $\phi : H \longrightarrow Sym(N)_{1}$ by $h\mapsto \phi_{h}$, where $\phi_{h}(n) = h\ast n$ for all $h\in H$ and $n\in N$. On the other hand, the group homomorphism $\phi: H \longrightarrow Sym(N)_{1}$ will give rise to a left action of $H$ on $N$, as established in the following theorem.	
\begin{theorem}\label{s2t1}
	Let $N$ be a loop and $H$ be a group. Let $\phi$ be a group homomorphism from $H$ to $Sym(N)_1$ and $\ast$ denote the left action of $H$ on $N$ determined by $\phi$. Let $L=\{(n,h)\mid n\in N, h\in H\}$ be the set and binary operation be defined on $L$ as $(n_1,h_1)\cdot(n_2,h_2)=(n_1\phi_{h_1}(n_2), h_1h_2)$. Then
	\begin{itemize}
		\item[$(i)$] $L$ is a loop with binary operation $\cdot$ operation and $|L|=|N||H|$.
		\item[$(ii)$] The set $N'=\{(n,1)\mid n\in N\}$ is a loop and the set $H'=\{(1,h)\mid h\in H\}$ is a group and the maps $h\mapsto(1,h)$ and $n\mapsto (n,1)$ are isomorphisms from $H$ to $H'$ and $N$ to $N'$ respectively.
	\end{itemize}
	Identifying $N$ and $H$ with their isomorphic copies in $L$, we get 
	\begin{itemize}
		\item[$(a)$] $N$ is a subloop of $L$ and $H$ is a subgroup of $L$,
		\item[$(b)$] $H\cap N=\{1\}$,
		\item[$(c)$] for all $h\in H, n\in N, \phi_h(n)=h(nh^{-1})$.
	\end{itemize}
\end{theorem}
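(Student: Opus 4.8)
The plan is to verify the loop axioms for $(L,\cdot)$ directly and then read off parts (ii) and (a)--(c) from the explicit form of the multiplication. First I would exhibit the identity: the element $(1,1)$, where the first $1$ is the identity of $N$ and the second is the identity of $H$, should work, and checking this uses only the two defining properties of $\phi$, namely $\phi_1 = \mathrm{id}_N$ (since $\phi$ is a homomorphism) and $\phi_h(1) = 1$ for every $h$ (since $\phi$ lands in $Sym(N)_1$). Explicitly, $(n,h)\cdot(1,1) = (n\phi_h(1), h) = (n,h)$ and $(1,1)\cdot(n,h) = (\phi_1(n), h) = (n,h)$.

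The substantive step is unique two-sided divisibility. For left division, given $a=(n_1,h_1)$ and $b=(n_2,h_2)$, I would solve $a\cdot x = b$ for $x=(n,h)$ by comparing coordinates: the $H$-coordinate equation $h_1 h = h_2$ is solved uniquely in the group $H$ by $h = h_1^{-1}h_2$, and the $N$-coordinate equation $n_1\phi_{h_1}(n) = n_2$ is solved by first using left division in the loop $N$ to get the unique $m$ with $n_1 m = n_2$, then using that $\phi_{h_1}$ is a bijection of $N$ to recover the unique $n$ with $\phi_{h_1}(n) = m$. For right division $x\cdot a = b$ the argument is symmetric but with one twist worth flagging: the $H$-coordinate $h h_1 = h_2$ forces $h = h_2 h_1^{-1}$ first, after which $\phi_h(n_1)$ is a fixed element of $N$, so the $N$-coordinate $n\,\phi_h(n_1) = n_2$ is solved uniquely by right division in $N$. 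This ordering — resolve the group coordinate, then reduce to a division in $N$ — is the only place any care is needed, and it is also where the hypotheses that each $\phi_h$ is a bijection and that $H$ is a group (not merely a loop) are genuinely used; I expect this to be the main, if modest, obstacle. The cardinality $|L| = |N||H|$ is immediate since $L = N\times H$ as a set.

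For (ii) I would compute the induced operations on $N'$ and $H'$. Using $\phi_1 = \mathrm{id}_N$, the product $(n_1,1)(n_2,1) = (n_1 n_2, 1)$ stays in $N'$ and shows $n\mapsto(n,1)$ is a loop isomorphism onto $N'$; using $\phi_{h_1}(1)=1$, the product $(1,h_1)(1,h_2) = (1, h_1 h_2)$ stays in $H'$ and shows $h\mapsto(1,h)$ is a group isomorphism onto $H'$. Both maps are bijections onto their images by construction, so $N'$ is a loop and $H'$ is a group. Parts (a) and (b) are then immediate: $N'$ and $H'$ both contain the identity $(1,1)$ and are closed, hence form a subloop and a subgroup of $L$, and $(n,1) = (1,h)$ forces $n=1$ and $h=1$, so $N'\cap H' = \{(1,1)\}$. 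Finally, for (c), I would compute $h(nh^{-1})$ under the identifications: $n h^{-1} = (n,1)(1,h^{-1}) = (n,h^{-1})$ and then $h(nh^{-1}) = (1,h)(n,h^{-1}) = (\phi_h(n), 1)$, which is exactly the image of $\phi_h(n)$ in $N'$; this also confirms consistency with the action $\ast$ defined before the theorem.
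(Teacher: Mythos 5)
Your proposal is correct, and on parts $(ii)$ and $(a)$--$(c)$ it follows essentially the same route as the paper: closure computations using $\phi_1=\mathrm{id}_N$ and $\phi_h(1)=1$, transfer of the loop and group structures along the bijections $n\mapsto (n,1)$ and $h\mapsto (1,h)$ (the paper instead re-verifies the loop and group axioms for $N'$ and $H'$ directly, a minor stylistic difference), and for $(c)$ the identical computation $(1,h)\bigl((n,1)(1,h^{-1})\bigr)=(\phi_h(n),1)$. The genuine divergence is part $(i)$: the paper does not prove it at all, simply citing \cite{rg14}, whereas you verify the loop axioms from scratch. Your verification is sound, and you correctly isolate the only delicate point: for left division $a\cdot x=b$ the $N$-coordinate is handled by left division in $N$ followed by inverting the permutation $\phi_{h_1}$, while for right division $x\cdot a=b$ one must resolve the $H$-coordinate first, since only after $h=h_2h_1^{-1}$ is fixed does $\phi_h(n_1)$ become a definite element of $N$ to which right division in $N$ applies; this is exactly where the hypotheses that $H$ is a group and that each $\phi_h$ is a bijection fixing $1$ enter. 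So your write-up is self-contained where the paper outsources the work, and otherwise agrees with it in substance.
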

\begin{proof}
	\begin{itemize}
		\item[$(i)$] It is clear that $L$ is a loop (see \cite[pg. 1]{rg14}). Also, $|L|=|N||H|$.
		\item[$(ii)$]  Let $N'=\{(n,1)\mid n\in N\}$ and $H'=\{(1,h)\mid h\in H\}$ be two subsets of $L$. First, we prove that $N'$ is a loop. For this, let $(n_1,1),(n_2,1)\in N'$. Then $(n_1,1)\cdot(n_2,1)=(n_1n_2,1)\in N'$. Now consider the equation, 
		\begin{equation}\label{s2e1}
		(x,y)(n_1,1)=(n_2,1).
		\end{equation}
		Then $(x\phi_{y}(n_{1}), y) = (n_{2}, 1)$. This gives that $y=1$ and $x\phi_y(n_1)=n_2$. Therefore, $xn_1=n_2$. Since $N$ is a loop, we have a unique $x \in N$. It follows the unique solution of the Equation (\ref{s2e1}) in $N^{\prime}$. Using the similar argument, we get the unique solution for the equation $(n_1,1)(x,y)=(n_2,1)$  in $N^{\prime}$. Also, we have $(1,1)\in N'$ such that $(n,1)(1,1) = (n,1) = (1,1)(n,1)$ for all $(n,1)\in N^{\prime}$. Hence $N'$ is a loop.
		
		Now we show that $H'$ is a group. Let $(1,h_1)$, $(1,h_2)$, $(1,h_3) \in H'$. Then $(1,h_1)(1,h_2)=(1,h_1h_2)\in H^{\prime}$. Also, 
		\begin{align*}
		(1,h_1)\cdot(1,h_2)(1,h_3)&= (1,h_1)(1,h_2h_3)\\
		&= (1,h_1(h_2h_3))\\
		&= (1,(h_1h_2)h_3), \; (\text{as $H$ is a group})\\
		&= (1,h_1h_2)(1,h_3)\\
		&= (1,h_1)(1,h_2)\cdot(1,h_3).
		\end{align*}
		Thus, associativity holds in $H^{\prime}$. Clearly, $(1,1)\in H^{\prime}$ is the identity element and $(1,h)^{-1}=(1,h^{-1})\in H'$ for all $(1,h)\in H^{\prime}$. Thus $H'$ is a group.
		
		Now define the maps $f:H\rightarrow H'$ and $g:N\rightarrow N'$ by $h\mapsto (1,h)$ and $n\mapsto(n,1)$ respectively. One can easily observe that  $f$  is a group isomorphisms and $g$ is a loop isomorphism so, $H\simeq H'$ and $N\simeq N'$.
		\item[$(a)$] Using the part $(ii)$, let us identify $N$ and $H$  with their corresponding isomorphic images $N^{\prime}$ and $H^{\prime}$ in $L$. Then $(a)$ holds.
		\item[$(b)$] Let $(x,y)\in H\cap N$. Then clearly, $x = 1 = y$. Hence $(iv)$ holds.
		\item[$(c)$]  Let $(n,1)\in N$ and $(1,h)\in H$. Then as above
		\begin{align*}
		(1,h)((n,1)(1,h)^{-1}) &= (1,h)((n,1)(1,h^{-1}))\\
		&= (1,h)(n, h^{-1})\\
		&= (\phi_{h}(n), 1)
		\end{align*}
		Thus, using the identification, we get $\phi_{h}(n) = h(nh^{-1})$.
	\end{itemize}
\end{proof}

\begin{theorem}
	Let $L$  be a loop with a subloop $N$ and subgroup $H$ such that $N\cap H=\{1\}$ and the elements of $H$ associate with the elements of $N$. Let $\phi:H\rightarrow Sym(N)_1$ be a group homomorphism defined by $\phi_h(n)=h(nh^{-1})$. Then $NH\simeq N\rtimes_\phi H$.
\end{theorem}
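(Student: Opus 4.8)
The plan is to exhibit the explicit map $\Theta : N\rtimes_\phi H \longrightarrow L$ given by $\Theta(n,h)=n\cdot h$ (the product computed in $L$), to show that its image is exactly $NH$, and that it is a bijective operation-preserving map onto this image; an isomorphism of the loop $N\rtimes_\phi H$ onto the subloop $NH$ then follows. Note first that $N\rtimes_\phi H$ is a genuine loop by Theorem~\ref{s2t1}$(i)$, since $\phi$ is assumed to be a group homomorphism into $Sym(N)_1$ (so in particular $\phi_h(n)=h(nh^{-1})\in N$); thus the domain of $\Theta$ makes sense. The recurring tool throughout will be the hypothesis that the elements of $H$ associate with the elements of $N$, i.e.\ each $h\in H$ behaves as a nuclear element in every product that mixes $H$ and $N$; I would use this to relocate brackets freely whenever an $H$-element is present.

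The heart of the argument is the homomorphism identity
\[
(n_1h_1)(n_2h_2)=\bigl(n_1\,\phi_{h_1}(n_2)\bigr)(h_1h_2).
\]
First I would record the intertwining relation $h_1n_2=\phi_{h_1}(n_2)\,h_1$: starting from $\phi_{h_1}(n_2)=h_1(n_2h_1^{-1})$ and right-multiplying by $h_1$, associativity of $H$ with $N$ collapses $(h_1(n_2h_1^{-1}))h_1$ to $h_1(n_2(h_1^{-1}h_1))=h_1n_2$. With this in hand I would compute $(n_1h_1)(n_2h_2)=n_1(h_1(n_2h_2))=n_1((h_1n_2)h_2)=n_1((\phi_{h_1}(n_2)h_1)h_2)=n_1(\phi_{h_1}(n_2)(h_1h_2))=(n_1\phi_{h_1}(n_2))(h_1h_2)$, each step being a bracket move sanctioned because the factor being moved past lies in $H$. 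Since the left side equals $\Theta(n_1,h_1)\,\Theta(n_2,h_2)$ and the right side equals $\Theta(n_1\phi_{h_1}(n_2),h_1h_2)=\Theta\bigl((n_1,h_1)(n_2,h_2)\bigr)$, the map $\Theta$ preserves the operation.

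Next I would prove $\Theta$ is injective. Suppose $n_1h_1=n_2h_2$. Right-multiplying by $h_1^{-1}$ and using associativity gives $n_1=n_2(h_2h_1^{-1})$, so the element $k:=h_2h_1^{-1}\in H$ is a solution in $L$ of the equation $n_2x=n_1$. Because $n_1,n_2\in N$ and $N$ is a subloop, this equation already has a solution lying in $N$; uniqueness of solutions in the loop $L$ then forces $k\in N$, whence $k\in N\cap H=\{1\}$. Thus $h_1=h_2$ and then $n_1=n_2$, giving injectivity (and in particular $|NH|=|N|\,|H|$). Surjectivity onto $NH$ is immediate from the definition of $NH$, so $\Theta$ is a bijective operation-preserving map of the loop $N\rtimes_\phi H$ onto $NH$; since a bijective loop homomorphism is an isomorphism, $NH$ is a subloop of $L$ and $NH\simeq N\rtimes_\phi H$.

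The main obstacle I anticipate is bookkeeping discipline in the homomorphism computation: every regrouping must be licensed by the presence of an $H$-element acting nuclearly, and one must resist using associativity among three elements of $N$, which is unavailable since $N$ is only a loop. A secondary subtlety is the injectivity step, where the argument hinges on transferring uniqueness of division from the subloop $N$ to the ambient loop $L$; I would make sure the solution of $n_2x=n_1$ supplied by $N$ and the one supplied by $L$ genuinely coincide before concluding $k\in N$.
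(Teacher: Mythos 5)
Your proof is correct and follows essentially the same route as the paper: both arguments hinge on the intertwining relation $hn=\phi_h(n)h$ (extracted from the nuclear behaviour of the elements of $H$) and exhibit the map $(n,h)\mapsto nh$ as an operation-preserving bijection of $N\rtimes_\phi H$ onto $NH$. The only differences are organizational and slightly in your favour: the paper first verifies directly that $NH$ is a subloop (explicitly solving the division equations using $\phi$) and proves uniqueness of the representation $nh$ by multiplying with a left inverse $n_2^{-1}$ in $N$, whereas you obtain subloop-ness as a corollary of the bijection and get injectivity by matching the unique solution of $n_2x=n_1$ in $L$ with the one supplied by the subloop $N$, avoiding inverses in the loop $N$ altogether.
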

\begin{proof}
	First, we show that $NH$ is a subloop of $L$. Let $n_{1}h_{1}, n_{2}h_{2}\in NH$. Then using the hypothesis that the elements of $H$ associates with the elements of $N$, we get
	\begin{align*}
	(n_{1}h_{1})(n_{2}h_{2}) &= (n_{1}h_{1})((n_{2}h_{1}^{-1}h_{1})h_{2})\\
	&= (n_{1}h_{1})(((n_{2}h_{1}^{-1})h_{1})h_{2})\\
	&= n_{1}(h_{1}((n_{2}h_{1}^{-1})h_{1}h_{2}))\\
	&= n_{1}((h_{1}(n_{2}h_{1}^{-1}))h_{1}h_{2})\\
	&= (n_{1}(h_{1}(n_{2}h_{1}^{-1})))(h_{1}h_{2}).
	\end{align*}
	Clearly $n_{1}(h_{1}(n_{2}h_{1}^{-1}))\in N$ and $h_{1}h_{2}\in H$. Thus $n_{1}h_{1}n_{2}h_{2}\in NH$ for all $n_{1}, n_{2}\in N$ and $h_{1}, h_{2}\in H$. Now, consider the equation 
	\begin{equation}\label{s2e2}
	X\cdot n_{1}h_{1} = n_{2}h_{2}.
	\end{equation} 
	Let $y = h_{2}h_{1}^{-1}\in H$ and $u\in N$ be the unique solution of the equation $x\cdot \phi_{h_{2}{h_{1}}^{-1}}(n_{1}) = n_{2}$. Then $uy\cdot n_{1}h_{1} = (u\phi_{y}(n_{1}))(yh_{1}) = (u\phi_{h_{2}h_{1}^{-1}}(n_{1}))((h_{2}h_{1}^{-1})h_{1}) = n_{2}h_{2}$. Thus $X = u(h_{2}{h_{1}}^{-1})\in NH$ gives the solution of the Equation (\ref{s2e2}). The uniqueness of $u$ gives the uniqueness of the solution $X$. Using the similar argument, we get the equation $n_{1}h_{1}\cdot X = n_{2}h_{2}$ has a unique solution in $NH$. Hence, $NH$ is a subloop of $L$. Now, we prove that each element of the subloop $NH$ can be written uniquely as the product of an element of $N$ and an element of $H$. Let $x \in NH$ be an element such that $x = n_{1}h_{1} = n_{2}h_{2}$. Then we have
	\begin{align*}
	(n_{1}h_{1})h_{1}^{-1} &= (n_{2}h_{2})h_{1}^{-1}\\
	n_{1}(h_{1}h_{1}^{-1})  &= n_{2}(h_{2}h_{1}^{-1})\\
	n_{1}&= n_{2}(h_{2}h_{1}^{-1})\\
	n_{2}^{-1}n_{1} &= n_{2}^{-1}(n_{2}(h_{2}h_{1}^{-1}))\\
	n_{2}^{-1}n_{1} &= (n_{2}^{-1}n_{2})(h_{2}h_{1}^{-1})\\
	n_{2}^{-1}n_{1} &= h_{2}h_{1}^{-1}.
	\end{align*}
	Since $N\cap H = \{1\}$, $n_{2}^{-1}n_{1} = h_{2}h_{1}^{-1} = 1$. This implies that $n_{1} = n_{2}$ and $h_{1} = h_{2}$. Therefore, each element of $NH$ can be uniquely expressed as $nh$, where $n\in N$ and $h\in H$. Let $n\in N$ and $h\in H$. Then there exists unique $n_{1}\in N$ and $h_{1}\in H$ such that $hn = n_{1}h_{1}$. Thus we have
	\begin{align*}
	n_{1}h_{1} &= (hn)h^{-1}h\\
	&= ((hn)h^{-1})h\\\
	&= (h(nh^{-1}))h.
	\end{align*}
	Using the uniqueness of representation, we get $n_{1} = h(nh^{-1})$ and $h_{1} =h$. Hence, $hn = (h(nh^{-1}))h$. Thus the map $f$ defined by $(n,h)\mapsto nh$ is a bijection of sets $NH$ and $N\times H$. Now 
	\begin{align*}
	f((n_1,h_1)(n_2,h_2)) &= f((n_1\phi_{h_1}(n_2),h_1h_2))\\ 
	&= (n_1\phi_{h_1}(n_2))(h_1h_2)\\ 
	&= (n_1(\phi_{h_1}(n_2)h_1))h_2\\ 
	&= (n_1((h_1(n_2h_1^{-1}))h_1))h_2\\ 
	&= (n_{1}(h_{1}n_{2}))h_{2}\\ 
	&= (n_1h_1)(n_2h_2)\\ 
	&= f((n_1,h_1))f((n_2,h_2)).
	\end{align*}
	This implies that $f$ is a loop homomorphism. Hence $NH\simeq N\rtimes H$.
\end{proof}

\begin{proposition}\label{s2p1}
	Let $L=N\rtimes_{\phi} H$ be a loop where $N$ and $H$ are groups. Then $L$ is a group if and only if  $\phi(H)$ is a subgroup of $Aut(N)$.
\end{proposition}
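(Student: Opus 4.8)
The plan is to collapse the entire statement into a single associativity identity. By Theorem \ref{s2t1}$(i)$, $L$ is already a loop with identity $(1,1)$, and a loop whose operation is associative is automatically a group (associativity together with the loop axioms produces a two-sided inverse of every element). Hence it suffices to prove that $L$ is associative if and only if each $\phi_h$ is an automorphism of $N$. Since $\phi(H)$ is the homomorphic image of $H$, it is in any case a subgroup of $Sym(N)_1$, and as $Aut(N)\le Sym(N)_1$, the condition ``$\phi(H)$ is a subgroup of $Aut(N)$'' is precisely the assertion that every $\phi_h$ is an automorphism.

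First I would expand both bracketings of $(n_1,h_1)(n_2,h_2)(n_3,h_3)$ using the defining rule. The second coordinates are $(h_1h_2)h_3$ and $h_1(h_2h_3)$, which coincide because $H$ is a group; so associativity of $L$ is equivalent to equality of the first coordinates, namely
\[ \big(n_1\phi_{h_1}(n_2)\big)\,\phi_{h_1h_2}(n_3) = n_1\,\phi_{h_1}\!\big(n_2\phi_{h_2}(n_3)\big) \]
for all $n_1,n_2,n_3\in N$ and $h_1,h_2\in H$. Next I would exploit the two group structures already present: because $\phi$ is a homomorphism, $\phi_{h_1h_2}=\phi_{h_1}\phi_{h_2}$, and because $N$ is associative I may regroup the left-hand side and left-cancel $n_1$, reducing the identity to
\[ \phi_{h_1}(n_2)\,\phi_{h_1}\!\big(\phi_{h_2}(n_3)\big) = \phi_{h_1}\!\big(n_2\,\phi_{h_2}(n_3)\big). \]

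Finally, since $\phi_{h_2}$ is a bijection of $N$, as $n_3$ ranges over $N$ the element $\phi_{h_2}(n_3)$ ranges over all of $N$; writing $a=n_2$ and $b=\phi_{h_2}(n_3)$, the condition becomes $\phi_{h_1}(a)\,\phi_{h_1}(b)=\phi_{h_1}(ab)$ for all $a,b\in N$ and all $h_1\in H$. This says exactly that each $\phi_{h_1}$ is multiplicative, hence, being already a bijection fixing $1$, an automorphism of $N$. Reading the implications in the reverse order (the classical group semidirect product) yields the converse, so $L$ is a group if and only if $\phi(H)$ is a subgroup of $Aut(N)$.

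The step I expect to be the main, if minor, obstacle is keeping the two-way reduction genuinely reversible: I must verify that the left-cancellation in $N$ and the substitution exploiting the bijectivity of $\phi_{h_2}$ are both invertible operations, so that the chain is a chain of equivalences rather than a single implication. I would also briefly cite or justify the fact that an associative loop is a group, to close the gap between ``associative'' and the group axioms.
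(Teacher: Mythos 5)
Your proof is correct and follows essentially the same route as the paper: both reduce the statement to the identity expressing associativity of triples in $L$ and identify it, via the homomorphism property of $\phi$ and associativity in $N$, with multiplicativity of each $\phi_h$. The only difference is organizational—you run a single reversible chain of equivalences (using cancellation in $N$ and bijectivity of $\phi_{h_2}$), whereas the paper proves the two implications separately, extracting multiplicativity in the converse by specializing to triples of the form $(1,h),(n_1,1),(n_2,1)$.
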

\begin{proof}
	Let $N$ and $H$ be groups such that $L=N\rtimes_{\phi} H$ is a loop. First, assume that $\phi(H)$ is a subgroup of $Aut(N)$. Then for all $(n_1,h_1),(n_2,h_2),(n_3,h_3)\in L$, we have
	\begin{align*}
	(n_1,h_1)((n_2,h_2)(n_3,h_3))&= (n_{1}, h_{1}) (n_{2}\phi_{h_{2}}(n_{3}), h_{2}h_{3})\\
	&= (n_{1}\phi_{h_{1}}(n_{2}\phi_{h_{2}}(n_{3})), h_{1}h_{2}h_{3})\\
	&= (n_{1}(\phi_{h_{1}}(n_{2}) \phi_{h_{1}}(\phi_{h_{2}}(n_{3}))), h_{1}h_{2}h_{3})\\
	&= ((n_{1}\phi_{h_{1}}(n_{2}))\phi_{h_{1}h_{2}}(n_{3}), h_{1}h_{2}h_{3})\\
	&= (n_{1}\phi_{h_{1}}(n_{2}), h_{1}h_{2})(n_{3}, h_{3})\\
	&=	(n_1,h_1)(n_2,h_2)\cdot(n_3,h_3).
	\end{align*}  	
	Thus $L$ is a group. Conversaly, let $L$ is a group. Then for all $n_{1}, n_{2}\in N$ and $h\in H$, we have
	\begin{align*}
	(\phi_{h}(n_{1}n_{2}), h) &= (1,h)(n_{1}n_{2}, h)\\
	&= (1,h)((n_{1}, 1)(n_{2}, 1))\\
	&= ((1,h)(n_{1}, 1))(n_{2}, 1)\\
	&= (\phi_{h}(n_{1}), h)(n_{2}, 1)\\
	&= (\phi_{h}(n_{1})\phi_{h}(n_{2}), h).
	\end{align*}
	Thus, $\phi_{h}(n_{1}n_{2}) = \phi_{h}(n_{1})\phi_{h}(n_{2})$. Hence $\phi_{h}\in Aut(N)$ and so, $\phi(H)$ is a subgroup of $Aut(N)$.
\end{proof}

%

\noindent	Let $L = N\rtimes_{\phi} H$ be a loop with corresponding group homomorphism $\phi: H \longrightarrow Sym(N)_{1}$. Then we define the set $Fix(\phi) = \{n\in N \mid \phi_{h}(n) = n \; \text{for all}\,\, h\in H\}$. Also, for each $n\in N$, we define a map $i_{n}: N\longrightarrow N$ by $i_{n}(n^{\prime}) = n(n^{\prime}n^{-1})$.

\begin{theorem}\label{s3t12}
	Let $L=N\rtimes_\phi H$ be a loop, where $N$ is a loop with left inverse property and $H$ is a group. Then the commutant of $L$ is $C(L)=\{(x,y)\mid x\in Fix(\phi), y\in Z(H) \, \text{and} \,\, \phi_y=i_{x^{-1}}\}$.
\end{theorem}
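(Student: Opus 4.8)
The plan is to unwind membership in the commutant directly from the multiplication rule and then read off the three conditions coordinatewise. By definition $(x,y)\in C(L)$ means $(x,y)(n,h)=(n,h)(x,y)$ for every $(n,h)\in L$. Using $(n_1,h_1)(n_2,h_2)=(n_1\phi_{h_1}(n_2),h_1h_2)$, the left-hand side is $(x\phi_y(n),yh)$ and the right-hand side is $(n\phi_h(x),hy)$. Equating the second coordinates for all $h$ gives exactly $y\in Z(H)$, while equating the first coordinates gives the identity
\begin{equation*}
x\phi_y(n)=n\phi_h(x)\quad\text{for all }n\in N,\ h\in H. \tag{$\ast$}
\end{equation*}
Thus the whole problem reduces to analysing $(\ast)$ together with $y\in Z(H)$.

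For the forward direction I would first specialise $(\ast)$ at $n=1$. Since $\phi_y(1)=1$ and $1$ is the identity of $N$, this yields $x=\phi_h(x)$ for all $h\in H$, that is, $x\in Fix(\phi)$. Feeding $x\in Fix(\phi)$ back into $(\ast)$ collapses the right-hand side to $nx$ (because $\phi_h(x)=x$), leaving $x\phi_y(n)=nx$ for all $n$. Here is where the left inverse property of $N$ enters: it guarantees that the left and right inverses in $N$ coincide, so $x^{-1}$ is a genuine two-sided inverse with $(x^{-1})^{-1}=x$, and both cancellation laws ${}^{-1}x\,(xz)=z$ and $x(x^{-1}z)=z$ hold. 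Applying $x^{-1}(x\,\cdot\,)$ to $x\phi_y(n)=nx$ gives $\phi_y(n)=x^{-1}(nx)$, which is precisely $i_{x^{-1}}(n)$ by the definition $i_m(n')=m(n'm^{-1})$ together with $(x^{-1})^{-1}=x$. Hence $\phi_y=i_{x^{-1}}$, completing the three necessary conditions.

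For the converse I would simply verify that the three stated conditions force $(x,y)(n,h)=(n,h)(x,y)$. The second coordinates agree because $y\in Z(H)$. For the first coordinates, $x\in Fix(\phi)$ turns the target $n\phi_h(x)$ into $nx$, and $\phi_y=i_{x^{-1}}$ rewrites $x\phi_y(n)$ as $x\bigl(x^{-1}(nx)\bigr)$, which equals $nx$ by the cancellation law $x(x^{-1}z)=z$ with $z=nx$. Thus $(\ast)$ holds and $(x,y)\in C(L)$, which establishes the reverse inclusion.

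The routine part is the coordinatewise bookkeeping; the one genuinely delicate point is the correct and repeated use of the left inverse property of $N$. Without LIP one could only solve $x\phi_y(n)=nx$ in terms of the \emph{left} inverse ${}^{-1}x$, which need not coincide with $x^{-1}$, and then the map obtained would fail to match $i_{x^{-1}}$, whose very definition involves $x^{-1}$ and the identity $(x^{-1})^{-1}=x$. Establishing once and for all that LIP yields two-sided inverses and both identities ${}^{-1}x\,(xz)=z$ and $x(x^{-1}z)=z$ is therefore the key fact on which both directions rest.
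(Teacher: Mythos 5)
Your proof is correct and follows essentially the same route as the paper's: unwind $(x,y)(n,h)=(n,h)(x,y)$ coordinatewise, specialize at $n=1$ to get $x\in Fix(\phi)$, use the left inverse property to solve $x\phi_y(n)=nx$ as $\phi_y=i_{x^{-1}}$, and verify the converse by the cancellation $x\bigl(x^{-1}(nx)\bigr)=nx$. Your explicit justification that LIP yields two-sided inverses, $(x^{-1})^{-1}=x$, and both cancellation identities is a welcome elaboration of a step the paper leaves implicit, but it is the same argument.
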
	

\begin{proof}
	Let $S = \{(x,y)\mid x\in Fix(\phi), y\in Z(H) \, \text{and} \,\, \phi_y=i_{x^{-1}}\}$. Then for $(x,y)\in C(L)$, we have $(x,y)(a,b)=(a,b)(x,y)$ for all $(a,b)\in L$. Therefore,
	\begin{equation*}
	(x\phi_{y}(a),yb)= (a\phi_b(x),by).
	\end{equation*}
	Thus $x\phi_{y}(a) = a\phi_b(x)$ and $yb = by$. Clearly, $y\in Z(H)$. For $a=1$, we get $\phi_b(x) = x$, which implies that $x\in Fix(\phi)$. Since left inverse property holds in $N$, $x\phi_{y}(a) = ax$ implies that $\phi_{y}(a) = x^{-1}(ax) = i_{x^{-1}}(a)$. Thus $(x,y)\in S$.
	
	\noindent		Conversely, let $(x,y)\in S$. Then 
	\begin{align*}
	(x,y)(a,b) &= (x\phi_{y}(a), yb)\\
	&= (xi_{x^{-1}}(a), by)\\
	&= (x(x^{-1}(ax)), by)\\
	&= (ax, by)\\
	&= (a\phi_{b}(x), by)\\
	&= (a,b)(x,y).
	\end{align*}
	Therefore, $(x, y)\in C(L)$. Hence, $C(L) = S$.
\end{proof}	

\begin{theorem}\label{s3t13}
	Let $L=N\rtimes_\phi H$ be a loop. Then the middle nucleus of $L$ is  $N_\mu(L)=\{(x,y) \mid \phi_h(x\phi_y(n))=\phi_h(x)\phi_{hy}(n)\; \text{and}\; \phi_h(x)\in {N}_\mu(N)\,\, \forall h\in H\; \text{and}\; n\in N\}$.
\end{theorem}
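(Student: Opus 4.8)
The plan is to unwind the defining condition of the middle nucleus directly in coordinates. By definition, $(x,y)\in N_\mu(L)$ means $(n_1,h_1)\cdot((x,y)(n_2,h_2)) = ((n_1,h_1)(x,y))\cdot(n_2,h_2)$ for all $(n_1,h_1),(n_2,h_2)\in L$. First I would compute both sides using the binary operation $(a,b)(c,d)=(a\phi_b(c),bd)$. The left-hand side expands to $(n_1\phi_{h_1}(x\phi_y(n_2)),\,h_1yh_2)$ and the right-hand side to $((n_1\phi_{h_1}(x))\phi_{h_1y}(n_2),\,h_1yh_2)$, where the second coordinates agree automatically because $H$ is a group. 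Hence the membership condition reduces to the single equation in $N$,
\[
n_1\phi_{h_1}(x\phi_y(n_2)) = (n_1\phi_{h_1}(x))\phi_{h_1y}(n_2),
\]
required to hold for all $n_1,n_2\in N$ and all $h_1\in H$.

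Next I would separate this identity into the two conditions appearing in the statement. Specializing $n_1=1$ immediately yields $\phi_{h_1}(x\phi_y(n_2)) = \phi_{h_1}(x)\phi_{h_1y}(n_2)$, which is the first listed condition (with $h_1$ playing the role of $h$ and $n_2$ that of $n$). Substituting this back into the displayed identity turns it into $n_1\cdot(\phi_{h_1}(x)\phi_{h_1y}(n_2)) = (n_1\phi_{h_1}(x))\phi_{h_1y}(n_2)$. The key observation is that $\phi_{h_1y}\in Sym(N)_1$ is a bijection of $N$, so as $n_2$ ranges over $N$ the element $m:=\phi_{h_1y}(n_2)$ ranges over all of $N$; thus the identity is equivalent to $n_1\cdot(\phi_{h_1}(x)\,m) = (n_1\phi_{h_1}(x))\,m$ for all $n_1,m\in N$, which is exactly the assertion $\phi_{h_1}(x)\in N_\mu(N)$. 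This establishes the forward inclusion.

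For the converse I would run the argument backwards: assuming both stated conditions, I substitute the first equation to rewrite $\phi_{h_1}(x\phi_y(n_2))$ as $\phi_{h_1}(x)\phi_{h_1y}(n_2)$ and then invoke $\phi_{h_1}(x)\in N_\mu(N)$ to reassociate, recovering the displayed identity and hence the middle-nucleus equation $(n_1,h_1)\cdot((x,y)(n_2,h_2)) = ((n_1,h_1)(x,y))\cdot(n_2,h_2)$. The main point requiring care — really the only obstacle — is the bijectivity step: one must use that $\phi_{h_1y}$ is surjective on $N$ to promote the identity from ``holds for all values $\phi_{h_1y}(n_2)$'' to ``holds for all $m\in N$,'' and one must keep in mind that both conditions are quantified over all $h_1\in H$, not merely $h_1=1$, since $h_1$ enters genuinely through $\phi_{h_1}(x)$ and $\phi_{h_1y}$. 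Everything else is routine coordinate bookkeeping from the definition of the operation $\cdot$.
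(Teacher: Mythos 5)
Your proposal is correct and follows essentially the same route as the paper: both unwind the middle-nucleus equation in coordinates to the single identity $n_1\phi_{h_1}(x\phi_y(n_2)) = (n_1\phi_{h_1}(x))\phi_{h_1y}(n_2)$, extract the first condition by specializing the left factor to the identity, and obtain $\phi_{h_1}(x)\in N_\mu(N)$ via the surjectivity of $\phi_{h_1y}$ (the paper writes $n_2=\phi_{hy}(n)$ where you write $m=\phi_{h_1y}(n_2)$). Your organization — reducing to one identity and proving it equivalent to the conjunction of the two conditions — is slightly tidier than the paper's two separate inclusions, but the mathematical content is identical.
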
	
\begin{proof}
	Let $M = \{(x,y) \mid \phi_h(x\phi_y(n))=\phi_h(x)\phi_{hy}(n)\; \text{and}\; \phi_h(x)\in {N}_\mu(N)\,\, \forall h\in H\; \text{and}\; n\in N\}$. Then for $(x,y)\in M$ and $(n_1,h_1), (n_2,h_2)\in L$, we have
	\begin{align*}
	(n_1,h_1)((x,y)(n_2,h_2))=&(n_1,h_1)(x\phi_y(n_2),yh_2)\\
	=&(n_1\phi_{h_1}(x\phi_y(n_2)),h_1yh_2)\\
	=&(n_1\phi_{h_1}(x)\cdot \phi_{h_1y}(n_2),h_1yh_2)\\
	=& (n_{1}\phi_{h_{1}}(x), h_{1}y)(n_{2}, h_{2})\\
	=&((n_1,h_1)(x,y))(n_2,h_2).
	\end{align*}   
	Thus $(x,y)\in N_{\mu}(L)$.
	
	\noindent		Conversely, let $(x,y)\in N_{\mu}(L)$. Then for all $n\in N$ and $h\in H$, we have
	\begin{align*}
	(\phi_{h}(x\phi_{y}(n)), h) &= (1,h)(x\phi_{y}(n), 1)\\
	&= (1,h)((x,y)(n, y^{-1}))\\
	&= ((1,h)(x,y))(n, y^{-1})\\
	&= (\phi_{h}(x), hy)(n, y^{-1})\\
	&= (\phi_{h}(x)\phi_{hy}(n), h).
	\end{align*}
	Thus, we get $\phi_{h}(x\phi_{y}(n)) = \phi_{h}(x)\phi_{hy}(n)$. Now, for all $n_1, n_2\in N$ and $h\in H$, we have
	\begin{align*}
	(n_1(\phi_{h}(x)n_2), hy) &= (n_1(\phi_{h}(x)\phi_{hy}(n_2)), hy), \; (\text{where $\phi_{hy}(n) = n_{2}$ for some $n\in N$})\\
	&= (n_1\phi_{h}(x\phi_y(n)),hy)\\
	&= (n_{1}, h)(x\phi_{y}(n), y)\\
	&=(n_1,h)((x,y)(n,1))\\
	&=((n_1,h)(x,y))(n,1)\\
	&= (n_{1}\phi_{h}(x), hy)(n, 1)\\
	&=((n_1\phi_{h}(x))\phi_{hy}(n),hy)\\
	&= ((n_1\phi_{h}(x))n_{2}, hy).
	\end{align*}
	Therefore, $n_1(\phi_{h}(x)n_2)=(n_1\phi_{h}(x))n_2$. This implies that $\phi_{h}(x)\in N_\mu(N)$ for all $h\in H$. Thus $(x,y)\in M$. Hence, $N_{\mu}(L) = M$.
\end{proof}

\begin{theorem}\label{s3t14}
	Let $L=N\rtimes_\phi H$ be a loop. Then the right nucleus of $L$ is ${N}_\rho(L)=\{(x,y)\mid \phi_h(n\phi_{h'}(x))=\phi_h(n)\phi_{hh'}(x)\, \text{and}\;\phi_{h}(x)\in {N}_\rho(N) \, \forall h,h'\in H\, \text{and}\,\, n\in N\}$.
\end{theorem}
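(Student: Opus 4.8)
The plan is to follow verbatim the template already used for Theorem \ref{s3t13}. Write $R$ for the set on the right-hand side. By the definition of the right nucleus, $(x,y)\in N_\rho(L)$ means $(n_1,h_1)((n_2,h_2)(x,y)) = ((n_1,h_1)(n_2,h_2))(x,y)$ for all $(n_1,h_1),(n_2,h_2)\in L$. Expanding both products by the semidirect-product rule, the second coordinates agree automatically (both equal $h_1h_2y$ by associativity in $H$), so membership in $N_\rho(L)$ is equivalent to the single identity in $N$
\[ n_1\phi_{h_1}(n_2\phi_{h_2}(x)) = (n_1\phi_{h_1}(n_2))\phi_{h_1h_2}(x) \qquad (\star) \]
holding for all $n_1,n_2\in N$ and $h_1,h_2\in H$. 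The whole statement thus reduces to showing that $(\star)$ is equivalent to the two displayed conditions.

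For the inclusion $R\subseteq N_\rho(L)$, I assume $(x,y)\in R$ and verify $(\star)$ directly. Applying the first condition with $h=h_1$, $h'=h_2$, $n=n_2$ rewrites the left side of $(\star)$ as $n_1(\phi_{h_1}(n_2)\phi_{h_1h_2}(x))$; then invoking the second condition with $h=h_1h_2$, namely $\phi_{h_1h_2}(x)\in N_\rho(N)$, lets me reassociate to $(n_1\phi_{h_1}(n_2))\phi_{h_1h_2}(x)$, which is the right side of $(\star)$. This is a two-line computation with no subtlety.

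For the converse $N_\rho(L)\subseteq R$ I proceed in two stages, exactly mirroring the middle-nucleus argument. First I specialize $(\star)$ to $n_1=1$, $h_1=h$, $n_2=n$, $h_2=h'$; since $\phi_h(1)=1$ and $1$ is the identity of $N$, the relation collapses to $\phi_h(n\phi_{h'}(x))=\phi_h(n)\phi_{hh'}(x)$, which is precisely the first condition. Second, I feed this identity back into the general $(\star)$: its left side becomes $n_1(\phi_{h_1}(n_2)\phi_{h_1h_2}(x))$, so $(\star)$ reads $n_1(\phi_{h_1}(n_2)\phi_{h_1h_2}(x)) = (n_1\phi_{h_1}(n_2))\phi_{h_1h_2}(x)$. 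Given an arbitrary $k\in H$, I take $h_2=1$ and $h_1=k$ so that $\phi_{h_1h_2}(x)=\phi_k(x)$, and I set $m_1=n_1$, $m_2=\phi_k(n_2)$; since $\phi_k\in Sym(N)_1$ is a bijection of $N$, the pair $(m_1,m_2)$ ranges over all of $N\times N$ as $(n_1,n_2)$ does, yielding $m_1(m_2\phi_k(x))=(m_1m_2)\phi_k(x)$ for every $m_1,m_2\in N$, i.e. $\phi_k(x)\in N_\rho(N)$. Both conditions then hold, so $(x,y)\in R$.

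The one step requiring care — the \emph{main obstacle}, such as it is — is this final bijectivity substitution: one must explicitly note that $\phi_k$ is a permutation of $N$ fixing $1$, so that substituting $m_2=\phi_k(n_2)$ produces an arbitrary element of $N$. This is the exact analogue of the parenthetical remark ``where $\phi_{hy}(n)=n_2$ for some $n\in N$'' in the proof of Theorem \ref{s3t13}; everything else is routine expansion of the multiplication in $L$, and, unlike the commutant theorem, no inverse-property hypothesis on $N$ is needed.
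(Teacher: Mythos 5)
Your proof is correct and takes essentially the same route as the paper's: the same reduction of right-nucleus membership to the single first-coordinate identity, the same two-step forward verification (condition one, then $\phi_{h_1h_2}(x)\in N_\rho(N)$), and the same two specializations in the converse. The only cosmetic difference is in extracting $\phi_k(x)\in N_\rho(N)$: the paper specializes $h_1=1$ (so $\phi_{h_1}=\phi_1$ is the identity and no bijectivity argument is needed), whereas you specialize $h_2=1$ and invoke the bijectivity of $\phi_k$ --- both are valid, and your ``main obstacle'' evaporates under the paper's choice.
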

\begin{proof}
	Consider the set $R = \{(x,y)\mid \phi_h(n\phi_{h'}(x))=\phi_h(n)\phi_{hh'}(x)\, \text{and}\;\phi_{h}(x)\in {N}_\rho(N) \, \forall h,h'\in H\, \text{and}\,\, n\in N\}$. Then for $(x,y)\in R$ and $(n_{1},h_{1}),(n_{2},h_{2})\in L$, we have
	\begin{align*}
	(n_{1},h_{1})\cdot (n_{2},h_{2})(x,y)&= (n_{1}, h_{1}) (n_{2}\phi_{h_{2}}(x),h_{2}y)\\
	&=(n_{1}\phi_{h_{1}}(n_{2}\phi_{h_{2}}(x)),h_{1}h_{2}y)\\
	&=(n_{1}\cdot\phi_{h_{1}}(n_{2})\phi_{h_{1}h_{2}}(x),h_{1}h_{2}y)\\
	&= (n_{1}\phi_{h_{1}}(n_{2})\cdot\phi_{h_{1}h_{2}}(x),h_{1}h_{2}y), \; (\text{as $\phi_{h}(x)\in N_{\rho}(N) \;\forall h\in H$})\\
	&= (n_{1}\phi_{h_{1}}(n_{2}), h_{1}h_{2})\cdot (x,y)\\
	&=(n_{1},h_{1}) (n_{2},h_{2})\cdot(x,y)
	\end{align*}
	Thus $(x,y)\in N_\rho(L)$.
	
	\noindent		 Conversely, let $(x,y)\in N_\rho(L)$. Then for all $n\in N$ and $h_{1}, h_{2})\in H$, we have
	\begin{align*}
	(\phi_{h_{1}}(n\phi_{h_{2}}(x)),h_{1}h_{2}y) &= (1,h_{1})((n\phi_{h_{2}}(x),h_{2}y))\\
	&=(1,h_{1})((n,h_{2})(x,y))\\
	&=((1,h_{1})(n,h_{2}))(x,y)\\
	&= (\phi_{h_{1}}(n), h_{1}h_{2})(x,y)\\
	&=(\phi_{h_{1}}(n)\phi_{h_{1}h_{2}}(x),h_{1}h_{2}y).
	\end{align*}
	Therefore, $\phi_{h_{1}}(n\phi_{h_{2}}(x)) = \phi_{h_{1}}(n)(\phi_{h_{1}h_{2}}(x))$. Now, for $n_{1}, n_{2}\in N$ and $h\in H$, we have 
	\begin{align*}
	(n_{1}(n_{2}\phi_{h}(x)),hy) &= (n_{1},1)(n_{2}\phi_{h}(x), hy)\\
	&=(n_{1},1)((n_{2},h)(x,y))\\
	&=((n_{1},1)(n_{2},h))(x,y)\\
	&= (n_{1}n_{2}, h)(x,y)\\
	&=((n_{1}n_{2})\phi_{h}(x), hy).
	\end{align*}
	Therefore, $n_{1}(n_{2}\phi_{h}(x)) = (n_{1}n_{2})\phi_{h}(x)$. Thus $(x,y)\in R$. Hence, $N_{\rho}(L) = R$.
\end{proof}

\begin{theorem}\label{s3t15}
	Let $L=N\rtimes_\phi H$ be a loop. Then the left nucleus of $L$ is ${N}_\lambda(L)=\{(x,y)\mid x\phi_y(n\phi_{h}(n'))=x\phi_y(n)\cdot \phi_{yh}(n') \, \forall h\in H\, \text{and}\,\, n,n'\in N\}$.
\end{theorem}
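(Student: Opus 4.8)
The plan is to unwind the definition $N_\lambda(L)=\{a\in L\mid a\cdot xy=ax\cdot y\}$ directly in the coordinatewise operation on $L$, exactly as was done for the middle and right nuclei in Theorems \ref{s3t13} and \ref{s3t14}. Write $S$ for the set on the right-hand side of the claimed identity. The core of the argument is a single computation: for $a=(x,y)$ and test elements $(n_1,h_1),(n_2,h_2)\in L$, expand both $(x,y)\cdot((n_1,h_1)(n_2,h_2))$ and $((x,y)(n_1,h_1))\cdot(n_2,h_2)$ using the rule $(a_1,b_1)(a_2,b_2)=(a_1\phi_{b_1}(a_2),b_1b_2)$, and then compare coordinates.

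For the converse inclusion $S\subseteq N_\lambda(L)$, I would start from $(x,y)\in S$ and evaluate the left bracketing: $(x,y)((n_1,h_1)(n_2,h_2))=(x\phi_y(n_1\phi_{h_1}(n_2)),\,yh_1h_2)$. Applying the defining relation of $S$ with $n=n_1$, $h=h_1$, $n'=n_2$ rewrites the first coordinate as $(x\phi_y(n_1))\phi_{yh_1}(n_2)$, and refactoring then gives $(x\phi_y(n_1),yh_1)(n_2,h_2)=((x,y)(n_1,h_1))(n_2,h_2)$, so $(x,y)\in N_\lambda(L)$. For the forward inclusion $N_\lambda(L)\subseteq S$, I would run this computation in reverse: given $(x,y)\in N_\lambda(L)$, the two bracketings agree for every test element, the second coordinates coincide automatically (both equal $yh_1h_2$), and equating first coordinates yields $x\phi_y(n_1\phi_{h_1}(n_2))=(x\phi_y(n_1))\phi_{yh_1}(n_2)$ for all $n_1,n_2\in N$ and $h_1\in H$, which is precisely membership in $S$ after relabelling $n_1\mapsto n$, $h_1\mapsto h$, $n_2\mapsto n'$.

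I do not expect a genuine obstacle; the statement is an honest reformulation of the left-nucleus identity in coordinates, and both inclusions follow from the same reversible computation. The one point worth flagging is a structural contrast with the middle- and right-nucleus theorems: there the defining products $x\cdot ay=xa\cdot y$ and $x\cdot ya=xy\cdot a$ place $a$ in an interior position, which forces a second, independent condition of the form $\phi_h(x)\in N_\mu(N)$ (respectively $\phi_h(x)\in N_\rho(N)$). In the left-nucleus case the element $a=(x,y)$ sits at the far left, so the nonassociativity of $N$ is never isolated into a standalone condition on $x$, and the characterization collapses to the single compatibility identity defining $S$. I would make sure the write-up records that the $H$-component $h_2$ of the second test element drops out of the first-coordinate equation, which is exactly why the condition in $S$ refers to only one group element $h$.
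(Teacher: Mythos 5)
Your proposal is correct and is essentially identical to the paper's proof: both inclusions are established by the same coordinate computation, expanding $(x,y)((n_1,h_1)(n_2,h_2))$ and $((x,y)(n_1,h_1))(n_2,h_2)$ via $(a_1,b_1)(a_2,b_2)=(a_1\phi_{b_1}(a_2),b_1b_2)$ and equating first coordinates, exactly as the paper does with its set $P$. Your closing remark—that no standalone condition $\phi_h(x)\in N_\lambda(N)$ arises because $(x,y)$ occupies the leftmost position—is accurate commentary but plays no role in the argument itself.
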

\begin{proof}
	Consider the set $P = \{(x,y)\mid x\phi_y(n\phi_{h}(n'))=x\phi_y(n)\cdot \phi_{yh}(n') \, \forall h\in H\, \text{and}\,\, n,n'\in N\}$. Then for $(x,y)\in P$ and $(n_{1},h_{1}), (n_{2},h_{2})\in L$, we have
	\begin{align*}
	(x,y)((n_{1},h_{1})(n_{2},h_{2}))&= (x,y)(n_{1}\phi_{h_{1}}(n_{2}), h_{1}h_{2})\\
	&= (x\phi_y(n_{1}\phi_{h_{1}}(n_{2})),yh_{1}h_{2})\\
	&=((x\phi_y(n_{1}))\phi_{yh_{1}}(n_{2}),yh_{1}h_{2})\\
	&= (x\phi_y(n_{1}), yh_{1})(n_{2}, h_{2})\\
	&=((x,y)(n_{1},h_{2}))(n_{2},h_{2}).
	\end{align*}
	Thus $(x,y)\in N_\lambda(L)$.
	
	\noindent		 Conversely, let $(x,y)\in N_\lambda(L)$. Then for all $(n_{1},h_{1}), (n_{2},h_{2})\in L$, we have	
	\begin{align*}
	(x\phi_y(n_{1}\phi_{h_{1}}(n_{2})),yh_{1}h_{2})&=(x,y)((n_{1}\phi_{h_{1}}(n_{2}),h_{1}h_{2}))\\
	&= (x,y)((n_{1},h_{1})(n_{2},h_{2}))\\
	&= ((x,y)(n_{1},h_{1}))(n_{2},h_{2})\\
	&= (x\phi_{y}(n_{1}),yh_{1})(n_{2},h_{2})\\
	&= ((x\phi_y(n_{1}))\phi_{yh_{1}}(n_{2}),yh_{1}h_{2})
	\end{align*}	
	This implies that $x\phi_y(n_{1}\phi_{h_{1}}(n_{2}))=(x\phi_y(n_{1}))\phi_{yh_{1}}(n_{2})$. Thus $(x,y)\in P$. Hence, $P = N_{\lambda}(L)$.
\end{proof}

\begin{proposition}\label{s5p5}
	Let $L = N\rtimes_{\phi} H$ be a loop, where $N$ and $H$ are abelian groups. Then $C(L) = \{(x,y) \mid x\in Fix(\phi)\; \text{and}\; y\in \ker(\phi)\}$.
\end{proposition}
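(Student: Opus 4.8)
The plan is to invoke Theorem \ref{s3t12} and then specialize its three defining conditions to the abelian setting, so that no genuinely new computation is needed. Since $N$ is an abelian group, it is in particular a loop with the left inverse property, and hence Theorem \ref{s3t12} applies verbatim, giving $C(L)=\{(x,y)\mid x\in Fix(\phi),\ y\in Z(H),\ \phi_y=i_{x^{-1}}\}$. The entire argument then reduces to simplifying the two conditions ``$y\in Z(H)$'' and ``$\phi_y=i_{x^{-1}}$'' under the hypotheses that both $H$ and $N$ are abelian.

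First I would dispose of the constraint on $y$. Because $H$ is abelian we have $Z(H)=H$, so the requirement $y\in Z(H)$ is automatically satisfied by every $y\in H$ and imposes no restriction at all.

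Next I would simplify the inner map $i_{x^{-1}}$. From the definition $i_n(n')=n(n'n^{-1})$ we get, for every $n'\in N$, the value $i_{x^{-1}}(n')=x^{-1}\bigl(n'(x^{-1})^{-1}\bigr)=x^{-1}(n'x)$; and since $N$ is a commutative group this equals $x^{-1}xn'=n'$. Thus $i_{x^{-1}}=\mathrm{id}_N$, the identity permutation, and crucially this holds independently of the choice of $x$. Consequently the condition $\phi_y=i_{x^{-1}}$ collapses to $\phi_y=\mathrm{id}_N$, which is precisely the assertion that $y\in\ker(\phi)$.

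Finally I would combine these observations. The condition $x\in Fix(\phi)$ survives unchanged, while the condition $\phi_y=i_{x^{-1}}$ becomes $y\in\ker(\phi)$; since $i_{x^{-1}}$ turned out not to depend on $x$, the constraints on the two coordinates decouple completely. This yields $C(L)=\{(x,y)\mid x\in Fix(\phi),\ y\in\ker(\phi)\}$, as required. The only real subtlety—indeed the single step worth isolating—is the observation that commutativity of $N$ forces $i_{x^{-1}}$ to be the identity; once that collapse is recognized, everything else is an immediate specialization of Theorem \ref{s3t12}, and I do not anticipate any genuine obstacle.
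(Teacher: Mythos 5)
Your proposal is correct and follows essentially the same route as the paper: both invoke Theorem \ref{s3t12}, use commutativity of $N$ to collapse $i_{x^{-1}}$ to the identity map (so that $\phi_y = i_{x^{-1}}$ becomes $y \in \ker(\phi)$), and use commutativity of $H$ to discard the condition $y \in Z(H)$. If anything, your write-up is slightly more careful than the paper's, since you explicitly verify the left inverse property hypothesis of Theorem \ref{s3t12} and compute $i_{x^{-1}}$ with the correct subscript, whereas the paper writes $i_{x}$.
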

\begin{proof}
	Let $N$ and $H$ be abelian groups. Then using the Theorem \ref{s3t12}, for any $(x,y)\in C(L)$, we have $\phi_{y}(n) = i_{x}(n) = x^{-1}(nx) = n = I_{N}(n)$ for all $n\in N$, where $I_{N}$ is the identity map on $N$. Therefore, $y\in \ker(\phi)$. Since $H$ is abelian, $Z(H) = H$. Hence, $C(L) = \{(x,y) \mid x\in Fix(\phi)\; \text{and}\; y\in \ker(\phi)\}$.
\end{proof}
\begin{proposition}
	Let $L = N\rtimes_{\phi} H$ be a loop, where $N$ is an abelian group. Then ${N}_\rho(L)= N_\mu(L)$.
\end{proposition}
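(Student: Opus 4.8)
The plan is to use the explicit descriptions of the middle and right nuclei established in Theorems~\ref{s3t13} and \ref{s3t14}, together with the extra hypothesis that $N$ is an abelian group. The key simplifying observation is that when $N$ is an abelian group, each $\phi_h$ lies in $Sym(N)_1$ and, since $N$ is associative, both $N_\mu(N)$ and $N_\rho(N)$ coincide with all of $N$; moreover, the nucleus conditions $\phi_h(x)\in N_\mu(N)$ and $\phi_h(x)\in N_\rho(N)$ become vacuous. Thus the two nuclei collapse to
\begin{align*}
N_\mu(L)&=\{(x,y)\mid \phi_h(x\phi_y(n))=\phi_h(x)\phi_{hy}(n)\ \ \forall h\in H,\ n\in N\},\\
N_\rho(L)&=\{(x,y)\mid \phi_h(n\phi_{h'}(x))=\phi_h(n)\phi_{hh'}(x)\ \ \forall h,h'\in H,\ n\in N\}.
\end{align*}
So the whole statement reduces to showing these two defining conditions are equivalent.

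First I would show $N_\rho(L)\subseteq N_\mu(L)$. Take $(x,y)\in N_\rho(L)$ and, given $h\in H$ and $n\in N$, specialize the right-nucleus identity. The obstacle is that the right-nucleus condition is phrased with a free element $x$ sitting on the right of a product $n\phi_{h'}(x)$, whereas the middle-nucleus condition has $x$ on the left of $x\phi_y(n)$. The bridge is commutativity of $N$: since $N$ is abelian, $x\phi_y(n)=\phi_y(n)\cdot x$ and $\phi_h(x)\phi_{hy}(n)=\phi_{hy}(n)\phi_h(x)$, so I can rewrite the middle-nucleus equation as $\phi_h(\phi_y(n)\cdot x)=\phi_{hy}(n)\cdot\phi_h(x)$. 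Now setting $h'=y$ and replacing $n$ by $\phi_y(n)$ in the right-nucleus identity, and using the homomorphism property $\phi_{hy}=\phi_h\phi_y$, the right-nucleus identity reads $\phi_h(\phi_y(n)\cdot x)=\phi_h(\phi_y(n))\cdot\phi_{hy}(x)$. I would then need to reconcile $\phi_{hy}(x)$ with $\phi_h(x)$, which holds on noting that $\phi_h(\phi_y(n))=\phi_{hy}(n)$ and re-indexing; the point is that both conditions say the map $\phi_h$ is additive across the relevant product, and for an abelian $N$ each is equivalent to $\phi_h$ being a homomorphism on the factor involving $x$.

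For the reverse inclusion $N_\mu(L)\subseteq N_\rho(L)$ I would run the same computation backwards, again invoking commutativity to swap the positions of $x$ and the $N$-factor and the relation $\phi_{hh'}=\phi_h\phi_{h'}$ to convert between the two composed indices. The cleanest route, which I would actually adopt in the write-up, is to prove the single clean equivalence: for an abelian group $N$, both $(x,y)\in N_\mu(L)$ and $(x,y)\in N_\rho(L)$ are equivalent to the statement that $\phi_h$ respects the product of $x$ with any element of $N$, i.e.\ $\phi_h(x\cdot m)=\phi_h(x)\phi_h(m)$ for all $h\in H,\ m\in N$ (using that $\phi_y,\phi_{h'}$ are bijections of $N$ so $\phi_y(n),\phi_{h'}(x)$ range appropriately, and that $\phi_{hy}=\phi_h\phi_y$). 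Establishing this common characterization reduces both containments to a symmetric statement, after which equality is immediate.

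The main obstacle I anticipate is bookkeeping with the two different composite subscripts ($hy$ versus $hh'$) and making sure the quantifiers match up: the middle-nucleus condition fixes the right index to the loop-element's own $y$, while the right-nucleus condition quantifies over an independent $h'$. I would resolve this by exploiting that $\phi$ is a homomorphism into $Sym(N)_1$ so each $\phi_{h'}$ is surjective, which lets me absorb the independent $h'$ into the free element $n$ and thereby match the two quantifier patterns. Once commutativity of $N$ and surjectivity of each $\phi_{h'}$ are in hand, the two nucleus conditions are seen to be the same single multiplicativity requirement on $\phi_h$, and hence $N_\mu(L)=N_\rho(L)$.
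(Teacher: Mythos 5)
Your proposal (in the ``cleanest route'' form you say you would actually write up) is correct, and it rests on exactly the same ingredients as the paper's proof: the explicit descriptions of $N_\mu(L)$ and $N_\rho(L)$ from Theorems \ref{s3t13} and \ref{s3t14} (with the conditions $\phi_h(x)\in N_\mu(N)$ and $\phi_h(x)\in N_\rho(N)$ vacuous because a group, being associative, has full nuclei), commutativity of $N$, the homomorphism property $\phi_{ab}=\phi_a\circ\phi_b$, and bijectivity of each $\phi_h$. The difference is organizational. The paper proves the two inclusions directly: $N_\rho(L)\subseteq N_\mu(L)$ by specializing the right-nucleus identity at $h'=1$ with $n$ replaced by $\phi_y(n)$, and $N_\mu(L)\subseteq N_\rho(L)$ by a long, rather opaque chain built on the substitution $n=\phi_{h'y}(\phi_{(h'y)^{-1}}(n))$ and two applications of the middle-nucleus identity. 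You instead factor both memberships through the single $y$-free condition $\phi_h(xm)=\phi_h(x)\phi_h(m)$ for all $h\in H$, $m\in N$; the equivalences do check out (middle $\Leftrightarrow$ clean by substituting $m=\phi_y(n)$ and using $\phi_{hy}\circ\phi_{y^{-1}}=\phi_h$; right $\Rightarrow$ clean by taking $h'=1$ and commuting; clean $\Rightarrow$ right by writing $n=\phi_{h'}(m)$, applying the clean identity at $h'$ and at $hh'$, and commuting). This buys something the paper's proof leaves hidden: both nuclei equal $X\times H$, where $X=\{x\in N\mid \phi_h(xm)=\phi_h(x)\phi_h(m)\ \forall h\in H, m\in N\}$, so membership does not depend on the $H$-coordinate at all --- a fact consistent with, and implicitly used in, Proposition \ref{s5p4}.

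One slip to fix before writing this up: in your first-attempt paragraph you specialize the right-nucleus identity at $h'=y$ and claim it reads $\phi_h(\phi_y(n)\cdot x)=\phi_h(\phi_y(n))\cdot\phi_{hy}(x)$. With $h'=y$ the left-hand side is actually $\phi_h(\phi_y(n)\cdot\phi_y(x))$, so $x$ does not appear bare, and the subsequent ``reconcile $\phi_{hy}(x)$ with $\phi_h(x)$ by re-indexing'' does not repair this. The correct specialization (and the one the paper uses) is $h'=1$, which gives $\phi_{h'}(x)=x$ and $\phi_{hh'}(x)=\phi_h(x)$ with nothing left to reconcile. Since your final route supersedes this passage, the error is harmless to the overall proof, but that paragraph should be deleted or corrected.
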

\begin{proof}
	Using the Theorems \ref{s3t13}, \ref{s3t14} and the fact that $N$ is a group, we get
	\begin{align*}
	N_\rho(L)&=\{(x,y)\mid \phi_h(n\phi_{h'}(x))=\phi_h(n)\phi_{hh'}(x) \forall h,h'\in H\, \text{and}\,\, n\in N\}\\
	\text{and}\quad N_\mu(L)&=\{(x,y) \mid \phi_h(x\phi_y(n))=\phi_h(x)\phi_{hy}(n)\, \forall h\in H\, \text{and}\,\, n\in N\}.
	\end{align*} 
	Now, let $(x,y)\in{N}_\rho(L)$. Then for all $h, h^{\prime}\in H$ and $n\in N$, we have 
	\begin{align*}
	\phi_{h}(x\phi_y(n))&= \phi_h(\phi_y(n)\phi_{1}(x))\\
	&=\phi_{hy}(n)\phi_{h1}(x)\\
	&=\phi_h(x)\phi_{hy}(n),\; (\text{as $N$ is an abelian group}). 
	\end{align*}
	Thus $(x,y)\in{N}_\mu(L)$ and so, ${N}_\rho(L)\subseteq{N}_\mu(L)$.
	
	\noindent	 Conversely, let $(x,y)\in{N}_\mu(L)$. Then for all $h, h^{\prime}\in H$ and $n\in N$, we have 
	\begin{align*}
	\phi_h(n)\phi_{hh^{\prime}}(x)&= \phi_{hh^{\prime}}(x)\phi_{h}(n),\; (\text{as $N$ is abelian})\\
	&= \phi_{hh^{\prime}}(x)\phi_{h}(\phi_{1}(n))\\
	&= \phi_{hh^{\prime}}(x)\phi_{h}(\phi_{(h^{\prime}y(h^{\prime}y)^{-1})}(n))\\
	&= \phi_{hh^{\prime}}(x)\phi_{h}(\phi_{h^{\prime}y}(\phi_{(h^{\prime}y)^{-1}}(n)))\\	
	&= \phi_{hh^{\prime}}(x)\phi_{h(h^{\prime}y)}(\phi_{(h^{\prime}y)^{-1}}(n))\\
	&= \phi_{hh^{\prime}}(x)\phi_{(hh^{\prime})y}(\phi_{(h^{\prime}y)^{-1}}(n))\\
	&= \phi_{hh^{\prime}}(x\phi_{y}(\phi_{(h^{\prime}y)^{-1}}(n))),\; (\text{as $(x,y)\in N_{\mu}(L)$})\\
	&= \phi_{h}(\phi_{h^{\prime}}(x\phi_{y}(\phi_{(h^{\prime}y)^{-1}}(n))))\\
	&= \phi_{h}(\phi_{h^{\prime}}(x)\phi_{h^{\prime}y}(\phi_{(h^{\prime}y)^{-1}}(n))),\; (\text{as $(x,y)\in N_{\mu}(L)$})\\
	&= \phi_{h}(\phi_{h^{\prime}}(x)\phi_{(h^{\prime}y)(h^{\prime}y)^{-1}}(n))\\
	&= \phi_{h}(\phi_{h^{\prime}}(x)\phi_{1}(n))\\
	&= \phi_{h}(\phi_{h^{\prime}}(x)n)\\
	&= \phi_{h}(n\phi_{h^{\prime}}(x)),\; (\text{as $N$ is abelian}).
	\end{align*}        
	Thus $(x,y)\in{N}_\rho(L)$ and so, ${N}_\mu(L)\subseteq{N}_\rho(L)$. Hence ${N}_\rho(L)={N}_\mu(L)$     
\end{proof}

\begin{proposition}\label{s5p1}
	Let $L = N\rtimes_{\phi} H$ be a loop, where $N$ is a group. Then ${N}_\rho(L)$ is a group.
\end{proposition}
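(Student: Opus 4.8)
The plan is to read the structure of $N_\rho(L)$ off Theorem \ref{s3t14} and then verify the group axioms, exploiting that associativity is automatic on any right nucleus. Since $N$ is a group it is associative, so $N_\rho(N)=N$ and the requirement $\phi_h(x)\in N_\rho(N)$ in Theorem \ref{s3t14} becomes vacuous. Hence the description collapses to $R:=N_\rho(L)=\{(x,y)\mid \phi_h(n\phi_{h'}(x))=\phi_h(n)\phi_{hh'}(x)\ \forall\,h,h'\in H,\ n\in N\}$. To prove $R$ is a group it suffices to show that $R$ contains the identity, is closed under products and under inverses: every element of $R$ lies in the right nucleus of $L$, so for $p,q,r\in R$ we have $p(qr)=(pq)r$ directly from the definition of $N_\rho(L)$, i.e. $R$ is associative; combined with the presence of the identity and closure under products and inverses, this shows that $R$ is a group.

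The identity is immediate: $(1,1)\in R$, since $\phi_{h'}(1)=1=\phi_{hh'}(1)$ makes both sides of the defining relation equal to $\phi_h(n)$.

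For closure under products I would avoid manipulating the $\phi$-relation directly and instead argue intrinsically. For $c,d\in R$ and arbitrary $a,b\in L$, repeated use of $c,d\in N_\rho(L)$ gives $(ab)(cd)=((ab)c)d=(a(bc))d=a((bc)d)=a(b(cd))$, whence $cd\in N_\rho(L)=R$. Each equality is an instance of the right-nucleus identity $x\cdot ya=xy\cdot a$ applied to either $c$ or $d$, so this step needs nothing beyond the fact that $R$ is the right nucleus.

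The remaining, and main, obstacle is closure under inverses, and this is exactly where the hypothesis that $N$ is a group is used. Given $(x,y)\in R$, Remark \ref{rem2} gives its inverse explicitly as $(x,y)^{-1}=(\phi_{y^{-1}}(x^{-1}),y^{-1})$; I would substitute this into the defining relation of $R$ and reduce it to the relation already known for $(x,y)$. The reduction relies on the existence of $x^{-1}$ in $N$ and on associativity in $N$ to rearrange the products appearing in $\phi_h(n\,\phi_{h'}(\phi_{y^{-1}}(x^{-1})))$, which would not be available for a general loop $N$. Once $(x,y)^{-1}\in R$ is verified, $R$ is an associative subloop of $L$ that contains $(1,1)$ and is closed under inverses, hence $N_\rho(L)=R$ is a group.
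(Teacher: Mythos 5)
Your proposal is correct, and it takes a genuinely different route from the paper on two of the four steps. The paper proves closure under products and associativity by explicit computation with the defining relation of Theorem \ref{s3t14}, expanding $\phi_h(n\phi_{h'}(x_1\phi_{y_1}(x_2)))$ step by step and regrouping inside $N$ (so the group hypothesis on $N$ is used there as well), whereas you obtain both facts intrinsically from the right-nucleus identity alone: your chain $(ab)(cd)=((ab)c)d=(a(bc))d=a((bc)d)=a(b(cd))$ is valid in an arbitrary loop, so in your argument the hypothesis that $N$ is a group is isolated entirely in the inverse step. That is a gain in clarity; what the paper's computation buys instead is an explicit closed form for products inside $N_\rho(L)$, which is what gets reused later (e.g.\ in Proposition \ref{s5p4}). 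On inverses you and the paper are in the same position: the paper merely asserts $(x,y)^{-1}=(\phi_{y^{-1}}(x^{-1}),y^{-1})\in N_\rho(L)$ with no verification, while you at least indicate the reduction, and your sketch does go through. Concretely: putting $h'=1$ and $n=x^{-1}$ in the defining relation gives $1=\phi_h(1)=\phi_h(x^{-1})\phi_h(x)$, i.e.\ $\phi_h(x^{-1})=\phi_h(x)^{-1}$ for all $h$; then replacing $n$ by $n\phi_{h'}(x)^{-1}$ in the relation and cancelling in the group $N$ yields $\phi_h(n\phi_{h'}(x^{-1}))=\phi_h(n)\phi_{hh'}(x^{-1})$, and since $h'y^{-1}$ runs over all of $H$ this is exactly the defining relation for $(\phi_{y^{-1}}(x^{-1}),y^{-1})$. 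Incidentally, your intrinsic method can be pushed one step further to eliminate the hypothesis on $N$ altogether: for $a\in N_\rho(L)$ with right inverse $b$, the identity $a(ba)=(ab)a=a$ forces $ba=1$, and $(x(yb))a=xy=((xy)b)a$ forces $x(yb)=(xy)b$ by cancellation, so the right nucleus of any loop is a group.
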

\begin{proof}
	Let $L = N\rtimes_{\phi} H$ be a loop, where $N$ is a group. Then using the Theorem \ref{s3t14}, ${N}_\rho(L)=\{(x,y)\mid y\in H, \phi_h(n\phi_{h'}(x))=\phi_h(n)\phi_{hh'}(x), \; \forall h,h^{\prime}\in H \; \text{and}\; n\in N\}$. Let $(x_1,y_1)$, $(x_2,y_2)$, $(x_3,y_3)\in {N}_\rho(L)$. Then $(x_1,y_1)(x_2,y_2)=(x_1\phi_{y_1}(x_2),y_1y_2)$. Now, for all $h,h^{\prime}\in H$ and $n\in N$, we have 
	\begin{align*}
	\phi_{h}(n\phi_{h^{\prime}}(x_1\phi_{y_1}(x_2))) &= \phi_{h}(n\phi_{h^{\prime}}(x_{1})\phi_{h^{\prime}y}(x_{2}))\\
	&= \phi_{h}(n\phi_{h^{\prime}}(x_{1}))\phi_{hh^{\prime}y}(x_{2})\\
	&= \phi_{h}(n)\phi_{hh^{\prime}}(x_{1}) \phi_{hh^{\prime}y}(x_{2})\\
	&= \phi_{h}(n)\phi_{hh^{\prime}}(x_{1}\phi_{y}(x_{2})).
	\end{align*}
	Thus $(x_{1}, y_{1})(x_{2}, y_{2})\in N_{\rho}(L)$. Now, for associativity, we have
	\begin{align*}
	(x_1,y_1)(x_2,y_2)\cdot(x_3,y_3)=&(x_1\phi_{y_1}(x_2),y_1y_2)(x_3,y_3)\\
	=&(x_1\phi_{y_1}(x_2)\phi_{y_1y_2}(x_3),y_1y_2y_3)\\
	=&(x_1\phi_{y_1}(x_2\phi_{y_2}(x_3)),y_1y_2y_3)\\
	=&(x_1,y_1)(x_2\phi_{y_2}(x_3),y_2y_3)\\
	=&(x_1,y_1)\cdot(x_2,y_2)(x_3,y_3).
	\end{align*} 
	Moreover, we have $(1,1)\in {N}_\rho(L)$ such that $(x,y)(1,1) = (x,y) = (1,1)(x,y)$. Therefore, $(1,1)$ is the identity element of ${N}_\rho(L)$. Also, for each $(x,y)\in N_{\rho}(L)$, we have $(x,y)^{-1} = (\phi_{y^{-1}}(x^{-1}),y^{-1})\in N_{\rho}(L)$. Hence ${N}_\rho(L)$ is a group. 
\end{proof}

\begin{proposition}\label{s5p2}
	Let $L = N\rtimes_{\phi} H$ be a loop, where $N$ is a group. Then ${N}_\lambda(L)$ is a group.
\end{proposition}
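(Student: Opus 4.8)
The plan is to first rewrite the description of $N_\lambda(L)$ given in Theorem \ref{s3t15} into a much simpler form using the hypothesis that $N$ is a group. The defining condition reads $x\phi_y(n\phi_h(n')) = (x\phi_y(n))\phi_{yh}(n')$; since $N$ is associative I can reassociate the right-hand side as $x(\phi_y(n)\phi_{yh}(n'))$ and then left-cancel $x$ (using that $N$ is a group). This collapses the condition to $\phi_y(n\phi_h(n')) = \phi_y(n)\phi_{yh}(n')$ for all $h\in H$ and $n,n'\in N$, a condition in which the first coordinate $x$ no longer appears. Thus membership in $N_\lambda(L)$ is governed by $y$ alone, in contrast with the right-nucleus situation of Proposition \ref{s5p1}, where the surviving condition constrains $x$.

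The key observation I would then make is that this surviving condition is exactly the statement that $\phi_y\in Aut(N)$. Indeed, putting $h=1$ and using $\phi_1=\mathrm{id}$ forces $\phi_y(nn')=\phi_y(n)\phi_y(n')$, so $\phi_y$ is an endomorphism and, being a bijection lying in $Sym(N)_1$, an automorphism; conversely, if $\phi_y\in Aut(N)$ then $\phi_y(n\phi_h(n'))=\phi_y(n)\phi_y(\phi_h(n'))=\phi_y(n)\phi_{yh}(n')$ because $\phi$ is a homomorphism. Hence $N_\lambda(L)=\{(x,y)\mid x\in N,\ \phi_y\in Aut(N)\}$.

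From here I would finish in one of two ways. The clean route is to set $D=\{y\in H\mid \phi_y\in Aut(N)\}=\phi^{-1}(Aut(N))$, which is a subgroup of $H$ as the preimage of the subgroup $Aut(N)\le Sym(N)_1$ under the homomorphism $\phi$. Then $N_\lambda(L)$ is precisely the set $N\times D$ with the inherited operation, that is, the semidirect product $N\rtimes_{\phi|_D}D$, and $\phi(D)$ is a subgroup of $Aut(N)$ (being a homomorphic image of the group $D$ contained in $Aut(N)$). Proposition \ref{s2p1} then applies verbatim and yields that $N_\lambda(L)$ is a group. The hands-on route, mirroring Proposition \ref{s5p1}, is to verify directly that $N\times D$ is closed (using $\phi_{y_1y_2}=\phi_{y_1}\phi_{y_2}$ and applying the condition for $y_2$ and then for $y_1$), that $(1,1)$ is the identity, that associativity of three factors reduces to the nucleus condition for the leftmost factor, and that inverses exist.

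The only genuinely delicate point is closure under inverses in the hands-on route. To show the right inverse $(x,y)^{-1}=(\phi_{y^{-1}}(x^{-1}),y^{-1})$ of Remark \ref{rem2} lies in $N_\lambda(L)$, I must check $y^{-1}\in D$; I would do this by applying $\phi_y$ to the target identity $\phi_{y^{-1}}(n\phi_h(n'))=\phi_{y^{-1}}(n)\phi_{y^{-1}h}(n')$, collapsing the right-hand side via the condition for $y$, and then cancelling $\phi_y$ using that it is a bijection with inverse $\phi_{y^{-1}}$. One must be slightly careful that $\phi_{y^{-1}}$ is a priori only a permutation, so that a two-sided inverse is obtained by the usual monoid argument (a left inverse and a right inverse in an associative monoid coincide); the clean route sidesteps this entirely by delegating to Proposition \ref{s2p1}. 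I expect this inverse and automorphism bookkeeping to be the main place where care is needed, while the reduction carried out in the first two paragraphs is the conceptual heart of the argument.
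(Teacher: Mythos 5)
Your proposal is correct, and it contains the paper's own argument as a special case while adding something the paper omits. The paper's proof simply asserts, citing Theorem \ref{s3t15}, that when $N$ is a group $N_\lambda(L)=\{(x,y)\mid \phi_y\in Aut(N)\}$, and then runs exactly your hands-on route: closure via $\phi_{y_1}\circ\phi_{y_2}=\phi_{y_1y_2}\in Aut(N)$, a three-factor associativity computation that uses $\phi_{y_1}\in Aut(N)$, the identity $(1,1)$, and the inverse $(x,y)^{-1}=(\phi_{y^{-1}}(x^{-1}),y^{-1})$. What you do differently is twofold. First, you actually prove the reduction that the paper only states: cancel $x$ using associativity and cancellation in $N$, take $h=1$ to see $\phi_y$ is a (bijective, hence iso) endomorphism, and use $\phi_{yh}=\phi_y\circ\phi_h$ for the converse; in the paper this derivation only surfaces later, inside the proof of Proposition \ref{s5p3}, so your write-up fills a genuine gap in exposition. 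Second, your clean route --- identifying $N_\lambda(L)$ with the sub-semidirect-product $N\rtimes_{\phi|_D}D$, where $D=\phi^{-1}(Aut(N))$ is a subgroup of $H$ as the preimage of the subgroup $Aut(N)\leq Sym(N)_1$, and then invoking Proposition \ref{s2p1} --- is a genuinely different finish from the paper's, and it buys the group axioms wholesale instead of verifying them one by one. One small simplification: your worry about $\phi_{y^{-1}}$ in the hands-on route is unnecessary, since $\phi$ is a group homomorphism into $Sym(N)_1$, so $\phi_{y^{-1}}=\phi_y^{-1}$ automatically, and the inverse of an automorphism is an automorphism; no monoid argument is needed.
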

\begin{proof}
	Let $L = N\rtimes_{\phi} H$ be a loop, where $N$ is a group. Then using the Theorem \ref{s3t15}, $N_\lambda(L)=\{(x,y)\mid \phi_y\in Aut(N)\}$. Now, let $(x_1,y_1)$, $(x_2,y_2)$, $(x_3,y_3)\in {N}_\lambda(L)$. Then $(x_1,y_1)(x_2,y_2)=(x_1\phi_{y_1}(x_2),y_1y_2)$. Since $\phi_{y_{1}}, \phi_{y_{2}} \in Aut(N)$, $\phi_{y_{1}}\circ \phi_{y_{2}} = \phi_{y_{1}y_{2}}\in Aut(N)$. Thus $(x_{1}, y_{1})(x_{2}, y_{2}) \in N_\lambda(L)$. Now for associativity, we have
	\begin{align*}
	(x_1,y_1)(x_2,y_2)\cdot(x_3,y_3)=&(x_1\phi_{y_1}(x_2),y_1y_2)(x_3,y_3)\\
	=&(x_1\phi_{y_1}(x_2)\phi_{y_1y_2}(x_3),y_1y_2y_3)\\
	=&(x_1\phi_{y_1}(x_2\phi_{y_2}(x_3)),y_1y_2y_3)\\
	=&(x_1,y_1)(x_2\phi_{y_2}(x_3),y_2y_3)\\
	=&(x_1,y_1)\cdot(x_2,y_2)(x_3,y_3).
	\end{align*} 
	Moreover, there exist $(1,1)\in N_{\lambda}(L)$ such that $(x,y)(1,1) = (x,y) = (1,1)(x,y)$ for all $(x,y)\in N_{\lambda}(L)$. Therefore, $(1,1)$ is the identity element of ${N}_\lambda(L)$. At last, for each $(x,y)\in N_{\lambda}(L)$, we have $(x,y)^{-1} = (\phi_{y^{-1}}(x^{-1}),y^{-1})\in N_{\lambda}(L)$. Hence ${N}_\lambda(L)$ is a group.
\end{proof}

\section{Isomorphic Semidirect Products}
In this section, we will give a criterion for the isomorphism between two loops obtained by the semidirect product of a loop and a group.
\begin{theorem}\label{s3t1}
	Let $L = N\rtimes_{\phi} H$ be a loop, where $N$ is a loop and $H$ is a group. Then for any decomposition $L=N_1\rtimes H_1$ with $N_1\simeq N$ and $H_1\simeq H$ there exists $f\in$ $Aut(L)$ such that $f(N_1)=N$ and $f(H_1)=H$.\\
	
	\noindent Let $L_1=N\rtimes_{\phi}H$ and  $L_2=N\rtimes_{\psi}H$ be two loops, where $N$ is a loop and $H$ is a group. Then $L_{1}$ is isomorphic to $L_{2}$ if and only if  there exist automorphisms $\alpha \in Aut(N)$ and $\beta \in Aut(H)$ such that $\alpha \circ \phi(h)\circ \alpha^{-1}=\psi(\beta(h))$ for all $h\in H$.
\end{theorem}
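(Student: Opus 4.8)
The plan is to establish the two implications separately, treating the backward direction by an explicit formula and the forward direction by reducing, via the first assertion of this theorem, to an isomorphism that respects the canonical factors. Throughout, write $\cdot_1$ and $\cdot_2$ for the operations of $L_1$ and $L_2$, and recall from Theorem \ref{s2t1} that each $L_i$ contains the canonical subloop $N=\{(n,1)\}$ and subgroup $H=\{(1,h)\}$ with $N\cap H=\{1\}$ and $L_i=NH$.

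For the backward direction, suppose $\alpha\in Aut(N)$ and $\beta\in Aut(H)$ satisfy $\alpha\circ\phi(h)\circ\alpha^{-1}=\psi(\beta(h))$ for all $h\in H$. Define $F:L_1\to L_2$ by $F(n,h)=(\alpha(n),\beta(h))$; this is evidently a bijection. To see it is a homomorphism I would compute $F((n_1,h_1)\cdot_1(n_2,h_2))=(\alpha(n_1\phi_{h_1}(n_2)),\beta(h_1h_2))$ and use that $\alpha$ is a loop automorphism together with the relation rewritten as $\alpha\circ\phi_{h_1}=\psi_{\beta(h_1)}\circ\alpha$ to turn this into $(\alpha(n_1)\,\psi_{\beta(h_1)}(\alpha(n_2)),\beta(h_1)\beta(h_2))$, which is exactly $F(n_1,h_1)\cdot_2 F(n_2,h_2)$. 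Hence $F$ is an isomorphism. This step is routine.

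For the forward direction, let $F:L_1\to L_2$ be an isomorphism. The key observation is that the images $N_1:=F(N)$ and $H_1:=F(H)$ furnish a decomposition $L_2=N_1\rtimes H_1$ with $N_1\simeq N$ and $H_1\simeq H$: indeed $N_1$ is a subloop and $H_1$ a subgroup of $L_2$, we have $N_1\cap H_1=F(N\cap H)=\{1\}$ and $N_1H_1=F(NH)=L_2$, and the associativity of the elements of $H$ with those of $N$ is preserved because $F$ is a homomorphism. Applying the first assertion of this theorem to $L_2$ then yields $f\in Aut(L_2)$ with $f(N_1)=N$ and $f(H_1)=H$, so that $g:=f\circ F$ is an isomorphism $L_1\to L_2$ carrying the canonical $N$ onto $N$ and the canonical $H$ onto $H$.

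It remains to extract $\alpha$ and $\beta$ from $g$. Since $g(N)=N$ and $g(H)=H$, I may write $g(n,1)=(\alpha(n),1)$ and $g(1,h)=(1,\beta(h))$, where $\alpha\in Aut(N)$ and $\beta\in Aut(H)$ because $g$ restricts to isomorphisms on the two factors. Using $(n,h)=(n,1)\cdot_1(1,h)$ gives $g(n,h)=(\alpha(n),\beta(h))$. Finally, applying $g$ to the identity $(1,h)\cdot_1(n,1)=(\phi_h(n),h)$ in $L_1$ and comparing with $g(1,h)\cdot_2 g(n,1)=(\psi_{\beta(h)}(\alpha(n)),\beta(h))$ forces $\alpha(\phi_h(n))=\psi_{\beta(h)}(\alpha(n))$ for all $n,h$, i.e. $\alpha\circ\phi(h)\circ\alpha^{-1}=\psi(\beta(h))$, as required. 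The main obstacle is precisely the forward direction's reliance on the first assertion: an arbitrary $F$ need not send the canonical $N$ and $H$ to themselves, so without the straightening automorphism $f$ the maps $\alpha,\beta$ would fail to be well defined, and confirming that $F(N)$ and $F(H)$ genuinely give a semidirect decomposition of $L_2$ is the crux that makes that reduction legitimate.
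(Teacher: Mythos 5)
Your proposal is correct and takes essentially the same approach as the paper: the backward direction is the identical explicit map $(n,h)\mapsto(\alpha(n),\beta(h))$, and the forward direction likewise invokes the theorem's first assertion to straighten an arbitrary isomorphism into one preserving the canonical $N$ and $H$, from which $\alpha$ and $\beta$ are extracted. The only cosmetic difference is that you obtain $\alpha\circ\phi_h=\psi_{\beta(h)}\circ\alpha$ by applying the straightened isomorphism to $(1,h)\cdot(n,1)=(\phi_h(n),h)$, whereas the paper applies it to a general product $(n'h)(nh')$ and then cancels; these are the same computation.
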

\begin{proof}
	First, assume that $L_1$ is isomorphic to $L_2$. Then there exists an isomorphism $\gamma$ from $L_{1}$ to $L_{2}$ such that $\gamma(L_1)=L_2$. Therefore $L_2=\gamma(N)\rtimes\gamma(H)$. Using the hypothesis, there exists $f\in Aut(L_2)$ such that $f(\gamma(N))=N$ and $f(\gamma(H))=H$. Since $f\circ\gamma$ is an isomorphism from $L_1$ to $L_2$, the map $f\circ \gamma$ induces certain automorphisms on $N$ and $H$. Let these automorphisms be $\alpha \in Aut(N)$ and $\beta \in Aut(H)$ such that $\alpha(N)=f(\gamma(N))=N$ and $\beta(H) = f(\gamma(H)) = H$.
	\vspace{.2cm}
	
	\noindent	Now, let $nh\in L_1$ be any element, where $n\in N$ and $h\in H$. Then  $f(\gamma(nh))= f(\gamma(n)\gamma(h))= f(\gamma(n))f(\gamma(h))=\alpha(n)\beta(h)$. Thus for any $n'h, nh'\in L_1$, we have		
	%
	\begin{align*}
	\alpha(n^{\prime})\alpha(\phi_{h}(n))\beta(hh^{\prime}) &=\alpha(n'\phi_h(n))\beta(hh')\\
	&= f(\gamma(n'\phi_h(n)hh'))\\
	&= f(\gamma((n'h)(nh')))\\
	&= f(\gamma(n'h)\gamma(nh'))\\ 
	&= f(\gamma(n'h))f(\gamma(nh'))\\ 
	&=(\alpha(n')\beta(h))(\alpha(n)\beta(h'))\\
	&=(\alpha(n')\psi_{\beta(h)}(\alpha(n)))(\beta(h)\beta(h'))\\
	&= (\alpha(n')\psi_{\beta(h)}(\alpha(n)))\beta(hh').
	\end{align*}
	
	\noindent Using the cancellation laws in $L_{2}$, we get $\alpha(\phi_h(n))=\psi_{\beta(h)}(\alpha(n))$ for all $n\in N$ and $h\in H$. Therefore, $\alpha \circ \phi_{h} = \psi_{\beta(h)}\circ \alpha$. This implies that $\alpha\circ \phi(h)\circ \alpha^{-1} = \psi(\beta(h))$ for all $h\in H$.
	\vspace{.2cm}
	
	\noindent 	Conversely, let us suppose that 
	$\alpha\in Aut(N)$ and $\beta\in Aut(H)$ such that $\alpha\circ \phi(h)\circ \alpha^{-1} = \psi(\beta(h))$ for all $h\in H$. Now, define a map $\eta : L_{1} \longrightarrow L_{2}$ by $(n,h) \mapsto (\alpha(n), \beta(h)$). Clearly, the map $\eta$ is well defined. Since the maps $\alpha$ and $\beta$ are bijections, $\eta$ is also a bijection. Now, for all $(n_{1},h_{1}), (n_{2},h_{2})\in L_1$, we have
	\begin{align*}
	\eta((n_{1},h_{1})\cdot(n_{2},h_{2}))&=\eta((n_{1}\phi_{h_{1}}(n_{2}),h_{1}h_{2}))\\
	&=(\alpha(n_{1}\phi_{h_{1}}(n_{2})),\beta(h_{1}h_{2}))\\
	&=(\alpha(n_{1})(\alpha\circ \phi_{h_{1}})(n_{2}),\beta(h_{1}h_{2}))\\
	&=(\alpha(n_{1})(\psi_{\beta(h_{1})}\circ\alpha)(n_{2}),\beta(h_{1}h_{2}))\\
	&=(\alpha(n_{1}) \psi_{\beta(h_{1})}(\alpha(n_{2})),\beta(h_{1})\beta(h_{2}))\\
	&=(\alpha(n_{1}),\beta(h_{1}))\cdot(\alpha(n_{2}),\beta(h_{2}))\\
	&=\eta((n_{1},h_{1}))\cdot \eta((n_{2},h_{2})).
	\end{align*}
	Thus $\eta$ is a loop homomorphism. Hence, $\eta$ is an isomorphism.
\end{proof}

\begin{theorem}\label{s4t2}
	Let $L_1=N\rtimes_{\phi}H$ and $L_2=N\rtimes_{\psi}H$ be two loops such that $\ker(\phi)=\{1\}= \ker(\psi)$. Then $L_1\simeq L_2$ if and only if $\phi(H)$ and $\psi(H)$ are conjugate in $Aut(N)$.
\end{theorem}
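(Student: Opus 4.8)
The plan is to reduce both directions to the isomorphism criterion already proved in Theorem \ref{s3t1}, which asserts that $L_1\simeq L_2$ if and only if there exist $\alpha\in Aut(N)$ and $\beta\in Aut(H)$ with $\alpha\circ\phi(h)\circ\alpha^{-1}=\psi(\beta(h))$ for all $h\in H$. The whole content of the present statement is the translation between this \emph{pointwise} relation (indexed by an automorphism $\beta$ of $H$) and the \emph{subgroup-level} relation $\alpha\,\phi(H)\,\alpha^{-1}=\psi(H)$, and the trivial-kernel hypotheses are precisely what powers that translation in one direction.

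For the forward direction I would assume $L_1\simeq L_2$ and invoke Theorem \ref{s3t1} to get $\alpha\in Aut(N)$ and $\beta\in Aut(H)$ with $\alpha\,\phi(h)\,\alpha^{-1}=\psi(\beta(h))$ for every $h$. Since $\beta$ is an automorphism of $H$ we have $\beta(H)=H$, so letting $h$ range over $H$ and collecting images gives $\alpha\,\phi(H)\,\alpha^{-1}=\psi(\beta(H))=\psi(H)$. As $\alpha\in Aut(N)$, this displays $\phi(H)$ and $\psi(H)$ as conjugate subgroups in $Aut(N)$; note this half uses neither kernel assumption. For the converse I would start from a conjugacy $\alpha\,\phi(H)\,\alpha^{-1}=\psi(H)$ with $\alpha\in Aut(N)$ and manufacture the required $\beta$. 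Because $\ker(\phi)=\{1\}$, the corestriction $\phi\colon H\to\phi(H)$ is an isomorphism, and because $\ker(\psi)=\{1\}$ the corestriction $\psi\colon H\to\psi(H)$ is an isomorphism, so $\psi^{-1}\colon\psi(H)\to H$ is available. Writing $c_\alpha$ for conjugation by $\alpha$, which restricts to an isomorphism $\phi(H)\to\alpha\,\phi(H)\,\alpha^{-1}=\psi(H)$, I would set $\beta=\psi^{-1}\circ c_\alpha\circ\phi\colon H\to H$. Being a composite of three isomorphisms, $\beta\in Aut(H)$, and by construction $\psi(\beta(h))=c_\alpha(\phi(h))=\alpha\,\phi(h)\,\alpha^{-1}$ for all $h$; feeding $\alpha$ and this $\beta$ into the converse half of Theorem \ref{s3t1} then yields $L_1\simeq L_2$.

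The delicate point, and the only place where the argument can fail without the stated hypotheses, is the construction of $\beta$: the conjugacy $\alpha\,\phi(H)\,\alpha^{-1}=\psi(H)$ is merely an equality of subgroups, whereas Theorem \ref{s3t1} demands the pointwise identity $\alpha\,\phi(h)\,\alpha^{-1}=\psi(\beta(h))$ realised by a \emph{bijection} $\beta$ of $H$. The assumptions $\ker(\phi)=\ker(\psi)=\{1\}$ make $\phi$ and $\psi$ injective, so that $\psi^{-1}\circ c_\alpha\circ\phi$ is a genuine automorphism of $H$ rather than a map that merely has the correct image; dropping either injectivity would leave $\beta$ ill-defined or non-bijective and break the refinement of the subgroup data to the pointwise relation. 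Everything else in the proof is a formal composition of isomorphisms together with two appeals to Theorem \ref{s3t1}.
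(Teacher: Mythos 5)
Your proof is correct and takes essentially the same approach as the paper: both directions reduce to Theorem \ref{s3t1}, and your $\beta=\psi^{-1}\circ c_{\alpha}\circ\phi$ is exactly the paper's $\beta=\psi^{-1}\circ\sigma\circ\phi$, where $\sigma$ is conjugation by $\alpha$ restricted to $\phi(H)$. The only difference is cosmetic: you observe directly that $c_{\alpha}$ restricts to an isomorphism $\phi(H)\to\psi(H)$, while the paper verifies injectivity, surjectivity and the homomorphism property of $\sigma$ by hand.
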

\begin{proof}
	First, let $L_1\simeq L_2$. Then by the Theorem \ref{s3t1}, there exists $\alpha\in Aut(N)$ and $\beta \in Aut(H)$ such that $\alpha\circ  \phi(h)\circ\alpha^{-1}=\psi(\beta(h))$ for all $h\in H$. Since $\beta(H)=H$, $\alpha\circ  \phi(H)\circ\alpha^{-1}=\psi(H)$. Thus $\phi(H)$ and $\psi(H)$ are conjugate in $Aut(N)$.
	\vspace{.2cm}
	
	\noindent	Conversely, let $\phi(H)$ and $\psi(H)$ are conjugate in $Aut(N)$. Then there exist $\alpha\in Aut(N)$ such that $\alpha \circ  \phi(H)\circ\alpha^{-1} = \psi(H)$. This defines a map $\sigma$ from $\phi(H)$ to $\psi(H)$ by $\phi(h) \mapsto \alpha \circ \phi(h) \circ \alpha^{-1}$ for all $h\in H$. Clearly, for all $h\in H$, $\sigma(\phi(h))$ is a bijection as both $\alpha$ and $\phi(h)$ are bijections. Also, $\sigma(\phi(h))(1) = (\alpha \circ \phi(h) \circ \alpha^{-1})(1) = 1$ for all $h\in H$. Therefore, $\sigma(\phi(h))\in Sym(N)_{1}$ and $\sigma(\phi(h))\in \psi(H)$.
	
	Now, let $h\in H$ such that $\sigma(\phi(h)) = I_{N}$, the identity map on $N$. Then $\phi(h)\circ \alpha = \alpha$ and so, $\phi(h) = 1$. Thus $h\in \ker(\phi) = \{1\}$. Therefore, $h = 1$ and so, $\ker(\sigma) = \{\phi(1) = I_{N}\}$. Thus, $\sigma$ is an injective map. Also, for any $\delta \in \psi(H)$ there exists $h\in H$ such that $\delta = \alpha\circ\phi(h)\circ\alpha^{-1}$. This proves that $\sigma$ is a bijection. In addition, for all $h,h^{\prime}\in H$, $\sigma(\phi(hh^{\prime})) = \alpha\circ \phi(hh^{\prime})\circ \alpha = \alpha\circ \phi(h) \phi(h^{\prime}) \circ \alpha^{-1} = (\alpha\circ \phi(h)\circ \alpha^{-1})(\alpha\circ \phi(h^{\prime})\circ \alpha^{-1}) = \sigma(\phi(h))\sigma(\phi(h^{\prime}))$. This shows that $\sigma$ is an isomorphism.
	
	Since $\ker(\phi) = \{1\} = \ker(\psi)$, the maps $\phi: H \longrightarrow \phi(H)$ and $\psi: H \longrightarrow \psi(H)$ are bijections. Thus the map $\psi^{-1}\circ \sigma \circ\phi: H \longrightarrow H$ is a bijection. Now $(\psi^{-1}\circ\sigma\circ \phi)(h_1h_2) = \psi^{-1}\circ\sigma (\phi(h_1h_2))=\psi^{-1}\circ\sigma(\phi(h_1)\phi(h_2)) = \psi^{-1}(\sigma(\phi(h_1)) \sigma(\phi(h_2))) = (\psi^{-1}\circ\sigma\circ \phi)(h_1)(\psi^{-1}\circ\sigma\circ \phi)(h_2)$. Therefore, $\psi^{-1}\circ\sigma\circ \phi \in Aut(H)$. Let $\beta = \psi^{-1}\circ\sigma\circ \phi$. Then for all $h\in H$, $\psi(\beta(h)) = \psi(\psi^{-1}\circ\sigma\circ \phi(h)) = \sigma(\phi(h)) = \alpha\circ \phi(h)\circ\alpha^{-1}$. Hence, $L_1\simeq L_2$. 
\end{proof}

%
%

\section{Split Meta-Cyclic Loop}
In this section, we will study the loops obtained by the semidirect product of two cyclic groups. An information about the left nucleus, middle nucleus, right nucleus and nucleus of such loops is given. Moreover, we will study the split metacyclic loops obtained by the semidirect product of two cyclic groups. In particular, we will study the loops $\mathbb{Z}_{p}\rtimes \mathbb{Z}_{m}$, where $p$ is an odd prime. The existence of such a loop is shown in the example below.
\begin{example}
	Let $\mathbb{Z}_5=\{0,1,2,3,4\}$ and $\mathbb{Z}_4=\{0,1,2,3\}$ be two cyclic groups of order five and four respectively. Then, we have loop structure $L_i=\mathbb{Z}_5\rtimes_{\phi_i}\mathbb{Z}_4$, where $\phi_i$ is a group homomorphism from $\mathbb{Z}_4$ to $Sym(\mathbb{Z}_5)_0$. Note that, the total number of such loops is equal to the total number of choices of the group homomorphisms $\phi_i: \mathbb{Z}_{4} \longrightarrow Sym(\mathbb{Z}_{5})_{0}\simeq S_{4}$, where $S_{4}$ is the symmetric group of degree 4. In this case, we get 16 choices of the homomorphisms $\phi_i$. Thus there are total 16 loops $L_{i} = \mathbb{Z}_5\rtimes_{\phi_i}\mathbb{Z}_4$ of order 20. Moreover, there are total 7 loops upto isomorphic. Among these there are 3 groups and 4 loops upto isomorphism. These are given as below,\\
	
	\noindent Case I - for $\phi_{1}(1)=I$, the identity map on the set $\mathbb{Z}_{5}$, the corresponding loop $L_1 = \mathbb{Z}_5\times_{\phi_1}\mathbb{Z}_4\simeq \mathbb{Z}_{20}$ is a cyclic group of order 20.\\
	
	\noindent Case II- for  $\phi_2(1)=(1243)$ and $\phi_3(1) = (1342)$, the corresponding loops $L_2$ and $L_3$ are isomorphic as groups.\\
	
	\noindent Case III- for $\phi_4(1)= (14)(23)$, the corresponding loop $L_4$ is a group.\\
	
	\noindent Case IV- for $\phi_5(1)=(1234)$, $\phi_{6}(1) = (1324)$, $\phi_{7}(1) = (1432)$ and $\phi_{8}(1) = (1423)$, the corresponding loops $L_5, L_6, L_7$ and $L_8$ are isomorphic as loop.\\
	
	\noindent Case V- for $\phi_9(1)=(12)$, $\phi_{10}(1) = (13)$, $\phi_{11}(1) = (24)$ and $\phi_{12}(1) = (34)$, the corresponding loops $L_9, L_{10}, L_{11}$ and $L_{12}$ are isomorphic as loops.\\
	
	\noindent Case VI- for  $\phi_{13}(1)=(14)$ and $\phi_{14}(1) = (23)$, the corresponding loops $L_{13}$ and $L_{14}$ are isomorphic as loops.\\
	
	\noindent Case VII- for  $\phi_{15}(1)=(12)(34)$ and $\phi_{16}(1) = (13)(42)$, the corresponding loops $L_{15}$ and $L_{16}$ are isomorphic as loops.\\
	
	\noindent Using GAP \cite{gap}, we get
	
	\begin{table}[h!]
		\begin{tabular}{|c|c|c|c|c|c|c|}
			\hline
			{Case} & ${N}_\lambda(L)$  & ${N}_\rho(L)$ & ${N}_\mu(L)$ & ${N}(L)$ &  ${C}(L)$ & ${Z}(L)$ \\
			\hline
			IV & $\mathbb{Z}_{5}$ & $\mathbb{Z}_{4}$ & $\mathbb{Z}_{4}$ & Trivial & Trivial & Trivial \\
			\hline
			V & $\mathbb{Z}_{10}$ & $\mathbb{Z}_{4}$ & $\mathbb{Z}_{4}$ & $\mathbb{Z}_{2}$ & Size $= 6$ & $\mathbb{Z}_{2}$ \\
			\hline
			VI & $\mathbb{Z}_{10}$ & $\mathbb{Z}_{10}$ & $\mathbb{Z}_{4}$ & $\mathbb{Z}_{2}$ & Size $= 6$ & $\mathbb{Z}_{2}$ \\
			\hline
			VII & $\mathbb{Z}_{10}$ & $\mathbb{Z}_{4}$ & $\mathbb{Z}_{4}$ & $\mathbb{Z}_{2}$ & $\mathbb{Z}_{2}$ & $\mathbb{Z}_{2}$\\
			\hline
		\end{tabular}
		\caption{Nuclei, commutant and center of the loops in Case IV - VII.}
	\end{table}
\end{example} 

\noindent The above example motivates us to study loops which are the semidirect product of two cyclic groups defined as split metacyclic loops. Let $\mathbb{Z}_{n}$ and $\mathbb{Z}_{m}$ be two cyclic groups of order $n$ and $m$ respectively and $\phi : \mathbb{Z}_n \longrightarrow Sym(\mathbb{Z}_{m})_0$ be a group homomorphism. Then $L=\mathbb{Z}_{m}\rtimes_\phi\mathbb{Z}_n$ is a metacyclic loop. 
Now, we study the metacyclic loops which are not groups as follows.

\begin{proposition}\label{s5p3}
	Let $L=\mathbb{Z}_{m}\rtimes_{\phi}\mathbb{Z}_p$ be a meta-cyclic loop which is not a group. Then ${N}_\lambda(G)\simeq \mathbb{Z}_{m}$.
\end{proposition}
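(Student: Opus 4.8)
The plan is to exploit the explicit description of the left nucleus of a semidirect product whose normal factor is a group. Since here $N=\mathbb{Z}_{m}$ is a group, Theorem \ref{s3t15} (as simplified in the proof of Proposition \ref{s5p2}) gives $N_\lambda(L)=\{(x,y)\mid x\in\mathbb{Z}_m,\ y\in\mathbb{Z}_p,\ \phi_y\in Aut(\mathbb{Z}_m)\}$. The crucial observation is that the membership condition constrains only the second coordinate $y$, leaving the first coordinate $x$ entirely free. Hence, as a set, $N_\lambda(L)=\mathbb{Z}_m\times K$, where $K=\{y\in\mathbb{Z}_p\mid \phi_y\in Aut(\mathbb{Z}_m)\}$.

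Next I would verify that $K$ is a subgroup of $\mathbb{Z}_p$. Since automorphisms fix the identity, $Aut(\mathbb{Z}_m)\subseteq Sym(\mathbb{Z}_m)_0$, and because $\phi$ is a group homomorphism, $K=\phi^{-1}(Aut(\mathbb{Z}_m))$ is the preimage of a subgroup and therefore itself a subgroup of $\mathbb{Z}_p$. As $p$ is prime, $\mathbb{Z}_p$ has only the two subgroups $\{0\}$ and $\mathbb{Z}_p$, so either $K=\{0\}$ or $K=\mathbb{Z}_p$.

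I would then rule out the case $K=\mathbb{Z}_p$ by invoking the group criterion. If $K=\mathbb{Z}_p$, then $\phi_y\in Aut(\mathbb{Z}_m)$ for every $y$, i.e. $\phi(\mathbb{Z}_p)$ is a subgroup of $Aut(\mathbb{Z}_m)$; by Proposition \ref{s2p1} this forces $L$ to be a group, contradicting the hypothesis. Therefore $K=\{0\}$, so $N_\lambda(L)=\{(x,0)\mid x\in\mathbb{Z}_m\}$, which is precisely the isomorphic copy $N'$ of $N$ sitting inside $L$. Because $\phi_0$ is the identity map, the induced product is $(x_1,0)(x_2,0)=(x_1x_2,0)$, and the identification from Theorem \ref{s2t1}$(ii)$ then yields $N_\lambda(L)\simeq\mathbb{Z}_m$.

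The argument is short because the earlier results carry the technical weight; the only genuinely delicate points are confirming that the left-nucleus condition decouples in the first coordinate and that the surviving subgroup after applying primality is exactly the trivial one. The primality of $p$ is essential here: without it, $K$ could be a proper nontrivial subgroup and $N_\lambda(L)$ a strictly larger extension of $\mathbb{Z}_m$, as the order-$20$ examples with $H=\mathbb{Z}_4$ (where $N_\lambda$ can equal $\mathbb{Z}_{10}$) already illustrate.
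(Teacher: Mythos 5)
Your proposal is correct and follows essentially the same route as the paper: both reduce to the characterization $N_\lambda(L)=\{(x,y)\mid \phi_y\in Aut(\mathbb{Z}_m)\}$ coming from Theorem \ref{s3t15}, and both use primality of $p$ together with the hypothesis that $L$ is not a group to force $y=0$. The only cosmetic difference is how the contradiction is closed --- you invoke Proposition \ref{s2p1} after noting that $K=\phi^{-1}(Aut(\mathbb{Z}_m))$ is a subgroup of $\mathbb{Z}_p$, while the paper observes that a nonzero $y$ would generate all of $\mathbb{Z}_p$ and give $N_\lambda(L)=L$, i.e.\ associativity of $L$.
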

\begin{proof}
	Let $(x, y)\in N_{\lambda}(L)$ be any element. Then using the Theorem \ref{s3t15}, we get 
	\begin{equation}\label{e1}
	\phi_{y}(n\phi_{h}(n^{\prime})) = \phi_{y}(n)\phi_{yh}(n^{\prime}).
	\end{equation}
	Now, for all $n_{1}, n_{2}\in N$, we have
	\begin{align*}
	\phi_{y}(n_{1}n_{2}) &= \phi_{y}(n_{1}\phi_{h}(n^{\prime})),\; (\text{where $n_{2} = \phi_{h}(n^{\prime})$, for some $n^{\prime}\in N$ and $h\in H$})\\
	&= \phi_{y}(n_{1})\phi_{yh}(n^{\prime}),\; (\text{using the Equation (\ref{e1})})\\
	&= \phi_{y}(n_{1})\phi_{y}(n_{2}).
	\end{align*}
	Thus $\phi_{y}\in Aut(N)$ and so, $N_\lambda(L)=\{(x,y)\mid \phi_y\in Aut(N)\}$. Note that if $(x,y)\in N_{\lambda}(L)$ for any non-identity element $y\in \mathbb{Z}_{p}$, then $L = N_{\lambda}(L)$. But this is not possible as $L$ is a not a group. Therefore, $N_{\lambda}(L) = \{(x,y) \mid y = 0\; \text{and}\; x\in \mathbb{Z}_{m}\}\simeq \mathbb{Z}_{m}$. Hence ${N}_\lambda(G)\simeq \mathbb{Z}_{m}$.
\end{proof}
\begin{proposition}\label{s5p4}
	Let $L=\mathbb{Z}_{m}\rtimes_{\phi}\mathbb{Z}_{p}$ be a split meta-cyclic loop which is not a group. Then $|N_{\rho}(L)| = pr$, where $r\ne m$ and $r$ is a divisor of $m$. 
\end{proposition}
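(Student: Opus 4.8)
The plan is to combine the explicit description of the right nucleus from Theorem \ref{s3t14} with the fact that $N=\mathbb{Z}_m$ is a group. Since $N$ is a group we have $N_\rho(N)=N$, so the clause $\phi_h(x)\in N_\rho(N)$ in Theorem \ref{s3t14} is automatically satisfied and imposes nothing. The remaining defining condition, $\phi_h(n\phi_{h'}(x))=\phi_h(n)\phi_{hh'}(x)$ for all $h,h'\in\mathbb{Z}_p$ and $n\in\mathbb{Z}_m$, constrains only the first coordinate $x$ and never mentions $y$. Hence membership of $(x,y)$ in $N_\rho(L)$ is independent of $y$, and as a set $N_\rho(L)=X\times\mathbb{Z}_p$, where $X\subseteq\mathbb{Z}_m$ is the set of first coordinates satisfying the relation. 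In particular $|N_\rho(L)|=p\,|X|$, so it remains only to identify $|X|$ with a proper divisor $r$ of $m$.

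Next I would verify that $X$ is a subgroup of $\mathbb{Z}_m$ and set $r=|X|$. By Proposition \ref{s5p1}, $N_\rho(L)$ is a group; using $\phi_0=\mathrm{id}$ we have $(x_1,0)(x_2,0)=(x_1+x_2,0)$, so closure of $N_\rho(L)$ forces $X$ to be closed under addition, and it plainly contains $0$ (the condition is trivial for $x=0$ since every $\phi_h$ fixes $0$). Being a nonempty, finite, addition-closed subset of $\mathbb{Z}_m$, the set $X$ is a subgroup, so by Lagrange's theorem $r=|X|$ divides $m$. This already gives $|N_\rho(L)|=pr$ with $r\mid m$.

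It then remains to exclude $r=m$, i.e. $X=\mathbb{Z}_m$, and this is exactly where the hypothesis that $L$ is \emph{not} a group enters. Specializing the defining relation to $h'=0$ yields $\phi_h(nx)=\phi_h(n)\phi_h(x)$ for all $n\in\mathbb{Z}_m$ and all $x\in X$. If $X$ were all of $\mathbb{Z}_m$, this would say that every $\phi_h$ is an endomorphism, hence an automorphism (being a bijection fixing $0$), so $\phi(\mathbb{Z}_p)\subseteq Aut(\mathbb{Z}_m)$; by Proposition \ref{s2p1} this would make $L$ a group, contradicting the hypothesis. Therefore $X$ is a proper subgroup, whence $r<m$ and in particular $r\neq m$, which completes the argument.

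The only real subtlety to watch is the bookkeeping that produces the exact factor $p$: because the defining relation never constrains $y$, each admissible $x$ contributes the full fibre $\{x\}\times\mathbb{Z}_p$, and it is this $y$-independence, rather than any finer structural fact, that delivers $|N_\rho(L)|=p\,|X|$. Everything else is Lagrange's theorem together with a single application of Proposition \ref{s2p1}; notably, primality of $p$ is not actually needed for the order count.
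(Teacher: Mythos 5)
Your proof is correct, and its skeleton matches the paper's: invoke Theorem \ref{s3t14} (with the clause $\phi_h(x)\in N_\rho(N)$ vacuous because $\mathbb{Z}_m$ is a group), extract the factor $p$ from the $y$-independence of the defining relation, get $r\mid m$ from Proposition \ref{s5p1} together with Lagrange, and use the hypothesis that $L$ is not a group to rule out $r=m$. In fact you make explicit two points the paper's proof leaves implicit: that $N_\rho(L)=X\times\mathbb{Z}_p$ as a set, which is exactly what produces $|N_\rho(L)|=p|X|$, and that $X$ is a subgroup of $\mathbb{Z}_m$ (closure coming from $(x_1,0)(x_2,0)=(x_1+x_2,0)$ inside the group $N_\rho(L)$). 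The one place you genuinely diverge is the final step: the paper argues directly that $N_\rho(L)=L$ is impossible, since a loop equal to its right nucleus is associative and hence a group, so $N_\rho(L)\neq L$ forces $r\neq m$; you instead specialize the defining relation to $h'=0$ to see that $X=\mathbb{Z}_m$ would make every $\phi_h$ an endomorphism, hence an automorphism, and then invoke Proposition \ref{s2p1} to conclude $L$ would be a group. Both are sound; the paper's observation is shorter, while your route stays entirely within the semidirect-product machinery already developed in the paper and pinpoints concretely how non-associativity of $L$ manifests as a failure of the $\phi_h$ to be automorphisms.
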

\begin{proof}
	Using the Theorem \ref{s3t14}, we have ${N}_\rho(L)=\{(x,y)\mid  \phi_h(n\phi_{h'}(x))=\phi_h(n)\phi_{hh'}(x), \; \forall h, h^{\prime}\in \mathbb{Z}_{p}\; \text{and}\; n\in \mathbb{Z}_{m}\}$. Clearly, $(0,y)\in N_{\rho}(L)$ for all $y\in \mathbb{Z}_{p}$. Also, using the Proposition \ref{s5p1}, $|N_{\rho}(L)| = pr$, where $r$ divides $m$. Since $L$ is not a group, $N_{\rho}(L) \ne L$. Therefore, $r\ne m$. Hence, the proof.
\end{proof}
\begin{corollary}
	Let $L=\mathbb{Z}_{m}\rtimes_{\phi}\mathbb{Z}_{p}$ be a split meta-cyclic loop which is not a group. Then $N(L)$ is a group and $|N(L)| = r$, where $r\ne m$ and $r$ is a divisor of $m$. 
\end{corollary}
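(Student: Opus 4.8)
The plan is to compute $N(L)=N_\lambda(L)\cap N_\mu(L)\cap N_\rho(L)$ by assembling the structural results already obtained for the three nuclei, rather than analysing the defining identities afresh. First I would observe that since $N=\mathbb{Z}_m$ is an abelian group, the earlier proposition identifying the middle and right nuclei applies and gives $N_\mu(L)=N_\rho(L)$; hence the triple intersection collapses to $N(L)=N_\lambda(L)\cap N_\rho(L)$. By Propositions \ref{s5p1} and \ref{s5p2} both $N_\rho(L)$ and $N_\lambda(L)$ are groups, and the intersection of two subgroups of a loop is again a subgroup (closure, inverses and the identity are inherited, and associativity holds inside either factor), so $N(L)$ is a group. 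This settles the first assertion.

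For the order I would use the projection $\pi\colon L\to\mathbb{Z}_p$ given by $(x,y)\mapsto y$. Because the second coordinate of a product $(x_1,y_1)(x_2,y_2)=(x_1\phi_{y_1}(x_2),\,y_1y_2)$ is simply $y_1y_2$, the restriction of $\pi$ to the group $N_\rho(L)$ is a group homomorphism into $\mathbb{Z}_p$. By Proposition \ref{s5p4} we have $(0,y)\in N_\rho(L)$ for every $y\in\mathbb{Z}_p$, so this restriction is surjective. Its kernel is $\{(x,y)\in N_\rho(L)\mid y=0\}$, and by the explicit description $N_\lambda(L)=\{(x,0)\mid x\in\mathbb{Z}_m\}$ from Proposition \ref{s5p3}, this kernel is exactly $N_\rho(L)\cap N_\lambda(L)=N(L)$. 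The first isomorphism theorem then gives $|N_\rho(L)|=p\,|N(L)|$, and since $|N_\rho(L)|=pr$ by Proposition \ref{s5p4}, we conclude $|N(L)|=r$. The facts $r\mid m$ and $r\neq m$ are inherited verbatim from Proposition \ref{s5p4}.

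The one delicate point is to ensure that the kernel of $\pi|_{N_\rho(L)}$ really is $N(L)$ and not merely $N_\rho(L)\cap N_\lambda(L)$; this is precisely where the reduction $N_\mu(L)=N_\rho(L)$ carries the argument, since it lets me replace the full triple intersection by $N_\lambda(L)\cap N_\rho(L)$ and thereby identify $N(L)$ with that kernel. Once surjectivity and the kernel are pinned down, the remainder is a routine application of the first isomorphism theorem, so I expect no further obstacles.
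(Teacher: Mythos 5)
Your proof is correct, and it is worth separating its two halves. The groupness argument is essentially the paper's: the paper likewise reduces $N(L)$ to the set of pairs $(x,0)$ satisfying the right-nucleus identity (implicitly using $N_\mu(L)=N_\rho(L)$ for abelian $N$, which you invoke explicitly) and concludes that $N(L)$ is a group because $N_\lambda(L)$ and $N_\rho(L)$ are groups (Propositions \ref{s5p2} and \ref{s5p1}). For the order, however, you take a genuinely different route. The paper observes that its description exhibits $N(L)$ as isomorphic to a subgroup of $\mathbb{Z}_m$, proper because $L$ is not a group (if $N(L)$ were all of $\mathbb{Z}_m\times\{0\}$, every $\phi_h$ would be an automorphism of $\mathbb{Z}_m$ and $L$ would be a group by Proposition \ref{s2p1}); Lagrange's theorem then gives $|N(L)|=r$ with $r\mid m$ and $r\neq m$. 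You instead apply the first isomorphism theorem to the second-coordinate projection restricted to the group $N_\rho(L)$: surjectivity from $\{0\}\times\mathbb{Z}_p\subseteq N_\rho(L)$, kernel equal to $N_\lambda(L)\cap N_\rho(L)=N(L)$, hence $|N_\rho(L)|=p\,|N(L)|$, and Proposition \ref{s5p4} converts this into $|N(L)|=r$ with $r\mid m$, $r\neq m$. Your argument is slightly longer but establishes something the paper's proof does not make explicit: the corollary's $r$ coincides with the $r$ of Proposition \ref{s5p4}, i.e.\ $N(L)$ has index exactly $p$ in $N_\rho(L)$. Two citation caveats, affecting precision rather than correctness: the facts $(0,y)\in N_\rho(L)$ for all $y\in\mathbb{Z}_p$ and $N_\lambda(L)=\{(x,0)\mid x\in\mathbb{Z}_m\}$ are established inside the proofs of Propositions \ref{s5p4} and \ref{s5p3}, not in their statements; the paper's own proof of the corollary leans on the same unstated descriptions, so this is a shared imprecision rather than a gap in your argument.
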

\begin{proof}
	Using the Propositions \ref{s5p3} and \ref{s5p4}, we have 
	\[N(L) = \{(x,0) \mid \phi_h(n\phi_{h'}(x))=\phi_h(n)\phi_{hh'}(x),\; \forall h, h^{\prime}\in \mathbb{Z}_{p}\; \text{and}\; n\in \mathbb{Z}_{m}\}.\]
	Since $N_{\lambda}(L)$  and $N_{\rho}(L)$ are groups, $N(L)$ is a group. It is evident from the description of $N(L)$ that $N(L)$ is isomorphic to a proper subgroup of $\mathbb{Z}_{m}$. Hence $|N(L)| = r$, where $r\ne m$ and $r$ is a divisor of $m$.
\end{proof}
As an illustration, we will study the metacyclic loops of order $3^{3}$ below.

\begin{example}
	Let $L=\mathbb{Z}_9\rtimes_{\phi}\mathbb{Z}_3$ be a meta cyclic loop of order $3^{3}$, where $\phi$ is a group homomorphism from $\mathbb{Z}_3$ to $Sym(\mathbb{Z}_9)_0$. Then the total number of these split meta-cyclic loops is equal to $|Hom(\mathbb{Z}_3, Sym(\mathbb{Z}_9)_0)|=1233$. Using GAP \cite{gap}, we see that there are only $111$ meta-cyclic loops (with the given construction) upto isomorphism. Among these loops, there are only 2 groups upto isomorphism and 109 loops upto isomorphism. Now, we give the description of the nuclei, commutant and center of these loops below.
	\begin{itemize}
		\item[$(i)$]  For each loop $L$, $N_{\lambda}(L) \simeq \mathbb{Z}_9$.
		\item[$(ii)$]  Only $2$ loops have right nucleus isomorphic to $\mathbb{Z}_{3}\times \mathbb{Z}_{3}$ and all other loops have right nucleus isomorphic to $\mathbb{Z}_{3}$.
		\item[$(iii)$] For each loop $L$, $N(L) = Z(L)$. Only 2 loops have nucleus isomorphic to $\mathbb{Z}_{3}$ and rest all loops have trivial nucleus.
		\item[$(iv)$]  For each loop $L$, the associator is isomorphic to $\mathbb{Z}_9$.
		\item[$(v)$] The commutant of $10$ loops have order $6$ and all other loops have commutant of order $3$.
	\end{itemize} 
\end{example}


\begin{thebibliography}{00}
	
	\bibitem{drapal17} A. Dr\'{a}pal, \textit{On extensions of Moufang loops by a cyclic factor	that is coprime to three}, Communications in Algebra \textbf{45}(6) (2017), 2350--2376. 
	
	\bibitem{Fig07} \'{A}. Figula and K. Strambach, \textit{Loops which are semidirect products of groups}, Acta Mathematica Hungarica \textbf{114}(3) (2007), 247--266.
	
	\bibitem{gag14} S. M. Gagola III, \textit{Cyclic extensions of Moufang loops induced by semi-automorphism}, Journal of Algebra and Its Applications \textbf{13}(4) (2014), 1350128.
	
	\bibitem{gap} The GAP-Groups, \textit{Gap-Groups, Algoritheorems and Programming} (4.12.1) (2022), http://www.gap-system.org
	
	\bibitem{rg14} M. Greer and L. Raney, \textit{Moufang semidirect products of loops with groups and inverse property extensions}, Commentationes Mathematicae Universitatis Carolinae \textbf{55}(3) (2014), 411--420.
	
	\bibitem{kinyon20} M. K. Kinyon and O. Jones, \textit{Loops and semidirect products}, Communications in Algebra \textbf{28}(9) (2000), 4137--4164.
	
	
	
\end{thebibliography}
	\end{document}